\long\def\forget#1{}
\newcounter{commentcounter}
\newcommand{\comment}[1]{\stepcounter{commentcounter}{\color{red}\textbf{Comment \arabic{commentcounter}.} #1}
		\immediate\write16{}
		\immediate\write16{Warning: There was still a comment . . . }
		\immediate\write16{}}
	\newcounter{urscommentcounter}
	\def\?{\ 
		{\bf\color{red}???}\ 
		\immediate\write16{}
		\immediate\write16{Warning: There was still a question mark . . . }
		\immediate\write16{}}
	\theoremstyle{plain}
	\newtheorem{theorem}{Theorem}[section]
	\newtheorem{lemma}[theorem]{Lemma}
	\newtheorem{corollary}[theorem]{Corollary}
	\newtheorem{proposition}[theorem]{Proposition}
	\theoremstyle{definition}
	\newtheorem{definition}[theorem]{Definition}
	\newtheorem{definition-theorem}[theorem]{Definition-Theorem}
	\newtheorem{definition-remark}[theorem]{Definition-Remark}
	\newtheorem{point}[theorem]{}
	\newtheorem{remark}[theorem]{Remark}
	\theoremstyle{remark}
	\newcounter{zahl}
	\def\theenumi{(\alph{enumi})}
	\def\p@enumii{\theenumi}
	\newcommand{\DS}{\displaystyle}
	\newcommand{\TS}{\textstyle}
	\newcommand{\SC}{\scriptstyle}
	\newcommand{\SSC}{\scriptscriptstyle}
	\newcommand{\cG}{\mathcal{G}}
	\newcommand{\cM}{\mathcal{M}}
	\newcommand{\cO}{\mathcal{O}}
	\DeclareMathOperator{\Aut}{Aut}
	\DeclareMathOperator{\Frob}{Frob}
	\DeclareMathOperator{\Gal}{Gal}
	\DeclareMathOperator{\GL}{GL}
	\DeclareMathOperator{\Hom}{Hom}
	\DeclareMathOperator{\Isom}{Isom}
	\DeclareMathOperator{\QHom}{QHom}
	\DeclareMathOperator{\Quot}{Frac}
	\DeclareMathOperator{\Rep}{Rep}
	\DeclareMathOperator{\SL}{SL}
	\DeclareMathOperator{\Spec}{Spec}
	\DeclareMathOperator{\Spf}{Spf}
	\DeclareMathOperator{\Var}{V}
	\newcommand{\alg}{{\rm alg}}
	\newcommand{\et}{{\acute{e}t\/}}
	\newcommand{\fppf}{{\it fppf\/}}
	\newcommand{\fpqc}{{\it fpqc\/}}
	\DeclareMathOperator{\id}{\,id}
	\DeclareMathOperator{\im}{im}
	\newcommand{\mot}{{\cM ot_C^{\ul \nu}}}
	\newcommand{\red}{{\rm red}}
	\newcommand{\sep}{{\rm sep}}
	\DeclareMathOperator{\whtimes}{\mathchoice
		{\wh{\raisebox{0ex}[0ex]{$\DS\times$}}}
		{\wh{\raisebox{0ex}[0ex]{$\TS\times$}}}
		{\wh{\raisebox{0ex}[0ex]{$\SC\times$}}}
		{\wh{\raisebox{0ex}[0ex]{$\SSC\times$}}}}
	\renewcommand{\phi}{\varphi}
	\renewcommand{\epsilon}{\varepsilon}
	\newcommand{\BOne} {{\mathchoice{\hbox{\rm1\kern-2.7pt l\kern.9pt}}
			{\hbox{\rm1\kern-2.7pt l\kern.9pt}}
			{\hbox{\scriptsize\rm1\kern-2.3pt l\kern.4pt}}
			{\hbox{\scriptsize\rm1\kern-2.4pt l\kern.5pt}}}}
	\newcommand{\BA}{{\mathbb{A}}}
	\newcommand{\BD}{{\mathbb{D}}}
	\newcommand{\BF}{{\mathbb{F}}}
	\newcommand{\BL}{{\mathbb{L}}}
	\newcommand{\BN}{{\mathbb{N}}}
	\newcommand{\BO}{{\mathbb{O}}}
	\newcommand{\BP}{{\mathbb{P}}}
	\newcommand{\BQ}{{\mathbb{Q}}}
	\newcommand{\BZ}{{\mathbb{Z}}}
	\newcommand{\CF}{{\cal{F}}}
	\newcommand{\CG}{{\cal{G}}}
	\newcommand{\CH}{{\cal{H}}}
	\newcommand{\CJ}{{\cal{J}}}
	\newcommand{\CL}{{\cal{L}}}
	\newcommand{\CM}{{\cal{M}}}
	\newcommand{\CN}{{\cal{N}}}
	\newcommand{\CO}{{\cal{O}}}
	\newcommand{\CP}{{\cal{P}}}
	\newcommand{\CS}{{\cal{S}}}
	\newcommand{\CT}{{\cal{T}}}
	\newcommand{\CV}{{\cal{V}}}
	\newcommand{\CX}{{\cal{X}}}
	\newcommand{\CY}{{\cal{Y}}}
	\newcommand{\CZ}{{\cal{Z}}}
	\newcommand{\FG}{{\mathfrak{G}}}
	\newcommand{\FM}{{\mathfrak{M}}}
	\newcommand{\FP}{{\mathfrak{P}}}
	\newcommand{\FT}{{\mathfrak{T}}}
	\newcommand{\iii}{j}
	\newcommand{\scrA}{{\mathscr{A}}}
	\newcommand{\scrB}{{\mathscr{B}}}
	\newcommand{\scrH}{{\mathscr{H}}}
	\newcommand{\scrS}{{\mathscr{S}}}
	\let\setminus\smallsetminus
	\newcommand{\es}{\enspace}
	\newcommand{\ul}[1]{{\underline{#1}}}
	\newcommand{\ol}[1]{{\overline{#1}}}
	\newcommand{\wh}[1]{{\widehat{#1}}}
	\newcommand{\wt}[1]{{\widetilde{#1}}}
	\newcommand{\invlim}[1][]{\ifthenelse{\equal{#1}{}}
		{\DS \lim_{\longleftarrow}}
		{\DS \lim_{\underset{#1}{\longleftarrow}}}
	}
	\newcommand{\dirlim}[1][]{\ifthenelse{\equal{#1}{}}
		{\DS \lim_{\longrightarrow}}
		{\DS \lim_{\underset{#1}{\longrightarrow}}}
	}
	\newcommand{\dbl}{{\mathchoice{\mbox{\rm [\hspace{-0.15em}[}}
			{\mbox{\rm [\hspace{-0.15em}[}}
			{\mbox{\scriptsize\rm [\hspace{-0.15em}[}}
			{\mbox{\tiny\rm [\hspace{-0.15em}[}}}}
	\newcommand{\dbr}{{\mathchoice{\mbox{\rm ]\hspace{-0.15em}]}}
			{\mbox{\rm ]\hspace{-0.15em}]}}
			{\mbox{\scriptsize\rm ]\hspace{-0.15em}]}}
			{\mbox{\tiny\rm ]\hspace{-0.15em}]}}}}
	\newcommand{\dpl}{{\mathchoice{\mbox{\rm (\hspace{-0.15em}(}}
			{\mbox{\rm (\hspace{-0.15em}(}}
			{\mbox{\scriptsize\rm (\hspace{-0.15em}(}}
			{\mbox{\tiny\rm (\hspace{-0.15em}(}}}}
	\newcommand{\dpr}{{\mathchoice{\mbox{\rm )\hspace{-0.15em})}}
			{\mbox{\rm )\hspace{-0.15em})}}
			{\mbox{\scriptsize\rm )\hspace{-0.15em})}}
			{\mbox{\tiny\rm )\hspace{-0.15em})}}}}
	\newcommand{\dotBD}{\vbox{\hbox{\kern2pt\bf.}\vskip-4.5pt\hbox{$\BD$}}}
	\DeclareMathOperator{\QIsog}{QIsog}
	\DeclareMathOperator{\Nilp}{\CN \!{\it ilp}}
	\DeclareMathOperator{\Sets}{\CS \!{\it ets}}
	\def\s{\sigma^\ast}
	\def\ulHZ{{\underline{\hat Z\!}\,}{}}
	\def\longto{\longrightarrow}
	\def\into{\hookrightarrow}
	\def\isoto{\stackrel{}{\mbox{\hspace{1mm}\raisebox{+1.4mm}{$\SC\sim$}\hspace{-3.5mm}$\longrightarrow$}}}
	\newbox\mybox
	\def\arrover#1{\mathrel{
			\setbox\mybox=\hbox spread 1.4em{\hfil$\scriptstyle#1$\hfil}
			\vbox{\offinterlineskip\copy\mybox
				\hbox to\wd\mybox{\rightarrowfill}}}}
	\newcommand{\ppsi}{\delta}
	\newcommand{\RZ}{\ul{\CM}_{\ul{\BL}_0}^{\hat{Z}}}
	\newcommand{\BaseOfD}{\BF}
	\newcommand{\BaseFldOfLocSht}{k}
	\newcommand{\BaseFldInSectUnif}{k}
	\newcommand{\genericG}{P}
	\newcommand{\Sht}{Sht}
	\DeclareMathOperator{\SpaceFl}{\CF\ell}
	\newcommand{\tauGlob}{\tau}
	\newcommand{\tauLoc}{\hat\tau}
	\newcommand{\charsect}{s}
	\newcommand{\SSS}{S}
	\newcommand{\TTT}{T}
\begin{document}
		
		\author{Esmail Arasteh Rad and Urs Hartl}

		\date{\today}

		\title{The Conjecture of Langlands and Rapoport for the Moduli Stacks of $\FG$-Shtukas\\}
		
		\maketitle
		
		\begin{abstract}

			In this article we formulate and prove the analogue of the Langlands-Rapoport conjecture for the moduli stacks of global $\FG$-shtukas. Here $\FG$ is a parahoric Bruhat-Tits group scheme over a smooth projective curve $C$  over a finite field $\BF_q$. 
			\noindent
			
			\noindent
			{\it Mathematics Subject Classification (2000)\/}: 
			11G09,  
			(11G18,  
			14L05)  
		\end{abstract}
		
		\tableofcontents 
		
		\section{Introduction}

		According to Langlands' philosophy it is highly desirable to understand the cohomology of Shimura varieties, as they carry Hecke and Galois symmetries that can be used to relate Automorphic representations to Galois representations. This correspondence is supposed to be canonical, and this brings one to study the zeta functions associated to these varieties. Concerning this, Langlands made a conjecture about the structure of mod $p$ points of Shimura varieties, from which the expression of the zeta function of certain Shimura varieties as a product of automorphic $L$-functions follows. The conjecture was made more precise by Kottwitz \cite{Ko2} and consequently refined by Langlands and Rapoport \cite{LR}. This conjecture provides a conceptual group theoretic description of mod $p$ points of Shimura varieties.  As an evidence for the conjecture, Langlands and Rapoport proved the conjecture for simple Shimura varieties of PEL types A and C, under the assumption of Grothendieck's standard conjectures and the Tate conjecture for varieties over finite fields, and the Hodge conjecture for abelian varieties. Following work of many authors, including Morita, Milne, Pfau, Deligne, Reimann, Zink, Kottwitz \cite{Ko3} proved the conjecture for PEL Shimura varieties of type A and C and according to technical issues arising in the general set up,  Kisin, considerably later, proved the conjecture for Shimura varieties of \emph{abelian type} with \emph{hyperspecial} level structure \cite{Kis}. Recall that the Shimura varieties of abelian type are exactly those which fulfill the Deligne conception of Shimura varieties \cite{De3}. Indeed, concerning the issue of existence of canonical integral model for Shimura varieties, this assumption seems to be essential even for stating the conjecture.\\

		In this article, as an application of our results related to the local theory of global $\FG$-shtukas, which we developed in \cite{AH_Local} and \cite{AH_Global}, and also the motivic interpretation of these moduli stacks, see \cite{CMot}, we formulate the Langlands-Rapoport conjecture for the moduli stacks of global $\FG$-shtukas and prove it in a relatively general set up. On the other hand, in \cite{AH_LM} and \cite{Ara}, the authors studied the theory of local models for these moduli stacks, and their local counterparts. This is a crucial step towards computing the semi-simple trace of Frobenius on the cohomology of these moduli stacks, as it relates the semi-simple trace on the moduli of $\FG$-shtukas to the semi-simple trace on the Schubert varieties inside affine flag varieties, e.g. see \cite[Subsection 4.3]{Ara}, where one can implement the test-function conjecture of Kottwitz, see \cite{H-R2}, in order to express it in terms of certain functions in the center of the Hecke algebra. Consequently, these results provide some of the main ingredients for computing the Hasse-Weil zeta-function for the moduli stacks of global $\FG$-shtukas.

		Let $C$ be a smooth projective, geometrically irreducible curve over a finite field $\BF_q$ with $q$ elements, and let $Q=\BF_q(C)$ be its function field. Fix an $n$-tuple $\ul\nu=(\nu_1,\ldots,\nu_n)$ of pairwise different closed points $\nu_i\in C$. In the situation of Shimura varieties, $\BZ$ and $p$ are the analogue of our $C$ and $\ul\nu$. Consider a \emph{parahoric} (\emph{Bruhat-Tits}) \emph{group scheme} $\FG$ over $C$ in the sense of \cite[D\'efinition 5.2.6]{B-T} and \cite{H-R1}. According to the analogy between function fields and number fields, the moduli stacks of global $\FG$-shtukas appear as the function field counterparts of (the canonical integral model for) Shimura varieties. \\

		According to the analogy with the theory of Shimura varieties, which we mentioned above, one defines a $\nabla\scrH$-data as a pair $(\FG, \hat{Z}_\ul\nu)$ consisting of a parahoric group scheme $\FG$ over the curve $C$ and an $n$-tuple $\hat{Z}_\ul\nu=(\hat{Z}_{\nu_i})_i$ of bounds $\hat{Z}_{\nu_i}$.  Roughly speaking a bound $\hat{Z}_{\nu_i}$ is a closed subscheme of a certain formal completion $\wh{\CF\ell}_{\BP_{\nu_i}}$ of the twisted affine flag variety $\CF\ell_{\BP_{\nu_i}}$, associated with the parahoric group $\BP_{\nu_i}$; see definition \ref{DefBDLocal}. Here $\BP_{\nu_i}$ denote the parahoric group obtained by base change of $\FG$ to $\Spec A_{\nu_i}$, the spectrum of the completion of the stalk $\CO_{C,\nu_i}$ at $\nu_i$. To such a pair $(\FG, \hat{Z}_\ul\nu)$ one may associate a tower $\nabla_n^{\ast, \hat{Z}_\ul\nu}\scrH^1(C,\FG)^{\ul\nu}$ of formal algebraic stacks $\nabla_n^{H, \hat{Z}_\ul\nu}\scrH^1(C,\FG)^{\ul\nu}$, which are moduli spaces for global $\FG$-shtukas with fixed characteristics $\ul\nu$ (also called paws by V. Lafforgue \cite{Laff12}), bounded by $\hat{Z}_\ul\nu$, and endowed with $H$-level structure for a compact open subgroup $H\subseteq \FG(\BA^\ul\nu)$. Note that the tower of formal algebraic stacks lies over a reflex ring $R:=R(\FG,\hat{Z}_{\ul\nu})$ with finite residue field $\kappa:=\kappa(\FG,\hat{Z}_\ul\nu)$. It naturally carries an action of the adelic group $\FG(\BA^\ul\nu)$, that operates through Hecke correspondences. These formal algebraic stacks may be viewed as the function field counterparts of integral models for Shimura varieties and the parahoric case may reflect the bad reduction at the characteristic place. To such $\nabla\scrH$-data, we associate a groupoid 
		
		$$
		\CL^\ast(\FG, \hat{Z}_\ul\nu):=\bigsqcup_{\phi}\scrS(\phi),
		$$
		where the index $\phi$ runs over isomorphism classes of homomorphisms
		\begin{equation*}
			\varphi : \mathfrak{P} \longrightarrow \FG  \times_C  \Spec\bigl( \breve{Q} \otimes_{Q} \breve{Q}\bigr)
		\end{equation*}
		of groupoids. Here $\breve{Q}=Q\otimes_{\BF_q}\BF_q^{\alg}$ and $\FP$ is the Tannakian fundamental groupoid associated to a function fields motivic category $\CM ot_C^\ul\nu(\BF_q^\alg)$, introduced and studied in \cite{CMot}, see Definition \ref{DefCatCMotives} and Theorem \ref{ThmGroupTheoreticDescriptionOfIndexSet} below. Note that each groupoid $\scrS(\phi)$ is naturally equipped with an action of the adelic group $\FG(\BA^\ul\nu)$ which again operates through Hecke correspondences, with the action of the Frobenius $\Phi$ in $Gal(\ol\BF/\kappa)$, and with the action of $Z(Q)$ where $Z\subset \FG$ is the center; see Definition~\ref{DefFunctor_L} and Proposition~\ref{PropThetaisEquivariantUnderGZPhi}. Here $\ol\BF$ is an algebraic closure of the finite field $\kappa$.

		\noindent
		Here we state the analog of the Langlands-Rapoport conjecture for moduli of $\FG$-shtukas. 
		
		\begin{theorem}\label{ThmL-RForG-ShtINT}
			
			There exist a canonical $\FG(\BA^{\ul\nu})\times Z(Q)$-equivariant isomorphism of functors
			
			\begin{equation}\label{EqL-Risom}
				\CL^*(-)\tilde{\to} \nabla_n^{*,-}\scrH^1(C,-)^{\ul\nu}(\ol \BF).
			\end{equation}

			Moreover via this isomorphism, the operation $\ul\Phi$ on the left hand side of the above isomorphism corresponds to the Frobenius endomorphism $\sigma$ on the right hand side. Furthermore:\\
			
			\begin{enumerate}
				\item 
				The functor $\CL^\ast(-)$ can be replaced by $\CL_{adm}^\ast(-)$. Note that for tamely ramified $\FG$ there exists a group theoretic description for $\CL_{adm}^\ast(-)$, moreover, \forget{due to Xuhua He when $X_{\underline{v}} ({\cal{G}}') \not= \emptyset$}
				
				\item
				when $\FG$ is quasi-split, one may further replace $\CL^\ast(-)$ by $\CL_{spe}^\ast(-)$.
				
			\end{enumerate}

		\end{theorem}
		
		This is Theorem \ref{ThmL-RForG-ShtTXT} in the text. For the definition of the functors $\CL_{adm}^\ast(-)$ and $\CL_{spe}^\ast(-)$ see Definition \ref{DefFunctor_L}. The proof is mainly given by adjoining the results obtained in \cite{AH_Local}, \cite{AH_Global} and \cite{CMot}, together with the main result of He in \cite{He}, and Hamacher and Kim in \cite{H-K}, see Section \ref{Sect_Proof_Of_LR_Conj}.

		\subsection{Notation and Conventions}\label{SubsectNotationandConventions}

		\begin{tabbing}
			$\genericG_\nu:=\FG\times_C\Spec Q_\nu,$\; \=\kill

			$C$\> \parbox[t]{0.711\textwidth}{a smooth projective geometrically irreducible curve over $\BF_q$,}\\[1mm]
			$Q:=\BF_q(C)$\> the function field of $C$,\\[1mm]
			
			$\breve{Q}$\>$Q\otimes_{\BF_q}\BF_q^{\alg}$\\[1mm]
			$\nu$\> a closed point of $C$, also called a \emph{place} of $C$,\\[1mm]
			$\BF_\nu$\> the residue field at the place $\nu$ on $C$,\\[1mm]
			$\BaseOfD$\> a finite field containing $\BF_q$,\\[1mm]

			$A_\nu$\> the completion of the stalk $\CO_{C,\nu}$ at $\nu$,\\[1mm]
			$Q_\nu:=\Quot(A_\nu)$\> its fraction field,\\[1mm]
			
			
			
			$\BO^\ul\nu$\> the ring of integral adeles of $C$ outside $\ul\nu$,\\[1mm]
			
			$\BA_Q^{\ul\nu}:=\BO^\ul\nu\otimes_{\CO_C} Q$\> the ring of adeles of $C$ outside $\ul\nu$,\\[1mm]

			$\BD_R:=\Spec R\dbl z \dbr$ \> \parbox[t]{0.711\textwidth}{the spectrum of the ring of formal power series in $z$ with coefficients in an $\BaseOfD$-algebra $R$,}\\[1mm]
			$\hat{\BD}_R:=\Spf R\dbl z \dbr$ \>\parbox[t]{0.711\textwidth}{ the formal spectrum of $R\dbl z\dbr$ with respect to the $z$-adic topology.}\\[1mm]
			
			\noindent
			When $R= \BaseOfD$ we drop the subscript $R$ from the notation of $\BD_R$ and $\hat{\BD}_R$.\\[1mm]

			$\FG$\> \parbox[t]{0.711\textwidth}{\forget{a flat affine group scheme of finite type over $C$} a parahoric (Bruhat-Tits) group scheme over $C$; recall that} \\[1mm]
			
		\end{tabbing}
		
		\begin{definition}\label{DefParahoric}
			A smooth affine group scheme $\FG$ over $C$ is called a \emph{parahoric} (\emph{Bruhat-Tits}) \emph{group scheme} if
			\begin{enumerate}
				\item
				all geometric fibers of $\FG$ are connected and the generic fiber of $\FG$ is reductive over $\BF_q(C)$,
				\item 
				for any ramification point $\nu$ of $\FG$ (i.e. those points $\nu$ of $C$, for which the fiber above $\nu$ is not reductive) the group scheme $\BP_\nu :=\FG\times_C\Spec A_\nu$ is a parahoric group scheme over $A_\nu$, as defined by Bruhat and Tits \cite[D\'efinition~5.2.6]{B-T}; see also \cite{H-R1}.
			\end{enumerate}
		\end{definition}

		\begin{tabbing}
			$\genericG_\nu:=\FG\times_C\Spec Q_\nu,$\; \=\kill
			
			$G$\>\parbox[t]{0.711\textwidth}{ the generic fiber of $\FG$ over $Q$,},\\[1mm]
			
			$\genericG_\nu:=\FG\times_C\Spec Q_\nu$\> the generic fiber of $\BP_\nu$ over $\Spec Q_\nu$,

		\end{tabbing}

		\noindent
		For a formal scheme $\wh S$ we denote by $\Nilp_{\wh S}$ the category of schemes over $\wh S$ on which an ideal of definition of $\wh S$ is locally nilpotent. We  equip $\Nilp_{\wh S}$ with the \fppf-topology. We also denote by
		\begin{tabbing}
			$\genericG_\nu:=\FG\times_C\Spec Q_\nu,$\; \=\kill
			$\ul \nu:=(\nu_i)_{i=1\ldots n}$\> an $n$-tuple of closed points of $C$,\\[1mm]
			$A_{\ul\nu}$\> \parbox[t]{0.711\textwidth}{the completion of the local ring $\CO_{C^n,\ul\nu}$ of $C^n$ at the closed point $\ul\nu=(\nu_i)$,}\\[1mm]
			$\Nilp_{A_{\ul\nu}}:=\Nilp_{\Spf A_{\ul\nu}}$\> \parbox[t]{0.711\textwidth}{the category of schemes over $C^n$ on which the ideal defining the closed point $\ul\nu\in C^n$ is locally nilpotent,}\\[2mm]
			$\Nilp_{\BaseOfD\dbl\zeta\dbr}:=\Nilp_{\hat\BD}$\> \parbox[t]{0.711\textwidth}{the category of $\BD$-schemes $S$ for which the image of $z$ in $\CO_S$ is locally nilpotent. We denote the image of $z$ by $\zeta$ since we need to distinguish it from $z\in\CO_\BD$.}\\[2mm]
			
			$\BP$\> \parbox[t]{0.711\textwidth}{\forget{a flat affine group scheme of finite type over $\BD=\Spec \wh A$}a smooth affine group scheme of finite type over $\BD=\Spec \BF\dbl z\dbr$,}\\[1mm] 
			$\genericG:=\BP\times_{\BD}\Spec Q$\> the generic fiber of $\BP$ over $\Spec Q$.

		\end{tabbing}


		We let $\Rep_{\BO^{\ul\nu}} \FG$ be the category of representations of $\FG$ in finite free $\BO^{\ul\nu}$-modules $V$. More precisely, $\Rep_{\BO^{\ul\nu}} \FG$ is the category of $\BO^{\ul\nu}\,$-morphisms \mbox{$\rho\colon\FG\times_C\Spec\BO^{\ul\nu}\to\GL_{\BO^{\ul\nu}}(V)$}.\\

		\noindent
		Let $S$ be an $\BF_q$-scheme. We denote by $\sigma_S :  S \to S$ its $\BF_q$-Frobenius endomorphism which acts as the identity on the points of $S$ and as the $q$-power map on the structure sheaf. Likewise we let $\hat{\sigma}_S: S\to S$ be the $\BaseOfD$-Frobenius endomorphism of an $\BaseOfD$-scheme $S$. We set
		\[
		C_S := C \times_{\Spec\BF_q} S\quad \text{ and }\quad\sigma := \id_C \times \sigma_S.
		\]
		Let $H$ be a sheaf of groups (for the \fppf-topology) on a scheme $X$. In this article a (\emph{right}) \emph{$H$-torsor} (also called an \emph{$H$-bundle}) on $X$ is a sheaf $\CG$ for the \fppf-topology on $X$ together with a (right) action of the sheaf $H$ such that $\CG$ is isomorphic to $H$ on an \fppf-covering of $X$. Here $H$ is viewed as an $H$-torsor by right multiplication.

		\bigskip

		We denote by $\breve{Q}_\nu$ the completion of the maximal unramified extension of $Q_\nu$ in an algebraic closure of $Q_\nu$, and $\breve{A}_\nu$, its ring of integers.
		
		\forget{
			-------------------------
			Since $\FG$ is parahoric the generic fiber $P_\nu$ of the group $\BP_\nu:=\FG\times_C \Spec A_\nu$ is reductive. We set $G_\nu:=P_\nu$.  
			Let $\CF\subseteq \scrB_\nu=\scrB(G_\nu, Q_\nu)$ be a maximal facet that is fixed by $\BP_\nu(A_\nu)$ and let $\scrA_\nu$ be an apartment that contains $\CF$. Let $A\subseteq G_\nu$ be the maximal $Q_\nu$-split torus that corresponds to the apartment $\scrA_\nu=\scrA(G_\nu,A,Q_\nu)$. Let $S_\nu$ be a maximal $\breve{Q}_\nu$-split torus of $G_\nu$ that contains $A$, defined over $Q_\nu$, this exist by [BT84, Corollaire 5.1.12]. Note that the centralizer $T_\nu=Z_{G_\nu}(S_\nu)$ is a maximal torus. We can chose a Borel subgroup $B_\nu\subseteq \breve{G}_\nu:= G_\nu\times_{Q_\nu}\breve{Q}_\nu$ that contains $\breve{T}_\nu:=T_\nu\times_{Q_\nu}\breve{Q}_\nu$. Note that $\breve{G}_\nu$ is quasi-split, see \cite[subsection 8.6]{BS}.

			Set $N_\nu :=N(T_\nu)$ be the normalizer of $T_\nu$ in $G_\nu$. Let $\pi_1(G_\nu)$ denote the fundamental group of $G_\nu$. Let $\Gamma_0 = \Gal(Q_v,\breve{Q}_\nu)$, 
			$\Gamma=\Gal(Q_\nu, Q_\nu)$. and by $\Gamma_{ur}$ the Galois group $\Gal(\breve{Q}_\nu/Q_\nu)$, with topological generator $\sigma_\nu$, the $\BF_v$-Frobenius. 
			It is defined as the quotient of the group of cocharacters $X^\ast(T_\nu)$ by the coroot lattice. The action of $\Gamma$ (resp. $\Gamma_0$) on $X^\ast(T_\nu)$ induces an action on $\pi_1(G_\nu)$ and we denote by $\pi_1(G_\nu)_{\Gamma_0}$  (resp. $\pi_1(G_\nu)_{\Gamma}$) the coinvariants under this action. Recall that there are the following 
			$$
			\kappa_{G_\nu}: G_\nu(\breve{Q}_\nu)\to\pi_1(G_\nu)_{\Gamma}
			$$
			surjective homomorphisms, see Kottwitz \cite[\S7]{Ko4} and also \cite[Section 2.a.2]{PR2}.
			-------------------------
		}

		\begin{definition}\label{DefLoopGps}
			The \emph{group of positive loops associated with $\BP$} is the affine group scheme $L^+\BP$ over $\BaseOfD$ whose $R$-valued points for an $\BaseOfD$-algebra $R$ are 
			\[
			L^+\BP(R):=\BP(R\dbl z\dbr):=\BP(\BD_R):=\Hom_\BD(\BD_R,\BP)\,.
			\]
			The \emph{group of loops associated with $\BP$} is the $\fpqc$-sheaf of groups $LP$ over $\BaseOfD$ whose $R$-valued points for an $\BaseOfD$-algebra $R$ are 
			\[
			LP(R):=P(R\dpl z\dpr):=P(\dot{\BD}_R):=\Hom_{\dot\BD}(\dot\BD_R,P)\,,
			\]
			where we write $R\dpl z\dpr:=R\dbl z \dbr[\frac{1}{z}]$ and $\dot{\BD}_R:=\Spec R\dpl z\dpr$. It is representable by an ind-scheme of ind-finite type over $\BaseOfD$; see \cite[\S\,1.a]{PR2}, or \cite[\S4.5]{B-D}.
			Let $\scrH^1(\Spec \BaseOfD,L^+\BP)\,:=\,[\Spec \BaseOfD/L^+\BP]$ (respectively $\scrH^1(\Spec \BaseOfD,P)\,:=\,[\Spec \BaseOfD/LP]$) denote the classifying space of $L^+\BP$-torsors (respectively $LP$-torsors). It is a stack fibered in groupoids over the category of $\BaseOfD$-schemes $S$ whose category $\scrH^1(\Spec \BaseOfD,L^+\BP)(S)$ consists of all $L^+\BP$-torsors (resp.\ $LP$-torsors) on $S$. The inclusion of sheaves $L^+\BP\subset LP$ gives rise to the natural 1-morphism 
			\begin{equation}\label{EqLoopTorsor}
				\scrH^1(\Spec \BaseOfD,L^+\BP)\longto \scrH^1(\Spec \BaseOfD,LP),~\CL_+\mapsto \CL\,.
			\end{equation}
		\end{definition}

		We let $\scrH^1(C,\FG)$ denote the category fibered in groupoids over the category of $\BF_q$-schemes, such that the objects over $S$, $\scrH^1(C,\FG)(S)$, are $\CG$-torsors over $C_S$ (also called $\CG$-bundles) and morphisms are isomorphisms of $\FG$-torsors. The resulting stack $\scrH^1(C,\FG)$ is a smooth Artin-stack locally of finite type over $\BF_q$; see \cite[Theorem~2.5]{AH_Global}.

		\section{$\FG$-shtukas and $\FG$-motives}\label{SectG-ShtAndGMotives}
		
		Recall that U. Jannsen proved that for a finite field $k$, the Grothendieck category of motives Mot(k), as defined via algebraic 
		correspondences modulo numerical equivalence, is a semi-simple 
		abelian category. Assuming the Grothendieck's Standard Conjecture D (stating that the numerical equivalence relation for algebraic cycles with rational coefficients coincides with the homological one), implies that there are fibre functors $\omega_\ell$, all $\ell= p,\infty$,
		and $\omega_{crys}$ such that
		$$\omega_\ell(M(X)) = H_{et}^\ast(X \otimes \BF, \BQ_\ell), \omega_{crys}(M(X)) = H_{crys}^\ast(X/W(\BF_q))\otimes B(\BF_q).
		$$
		Here $M(X)$ denotes the motive associated with a smooth projective schemes $X$ over $\BF_q$.
		
		Note that according to Deligne's philosophy, Shimura varieties with rational weight, may appear as moduli varieties for motives (with additional structures) \cite{De3}. This conception of Shimura varieties fits the case of Shimura varieties of \emph{abelian type}, and may fail in general, e.g. for Shimura varieties attached to exceptional groups. Nevertheless, as we will see in this section, one may achieve this for function field counterparts of Shimura varieties, i.e. for the moduli stack of global $\FG$-shtukas.  Indeed, one may interpret the moduli of $\FG$-shtukas as a moduli for motives in the following sense. We consider the category of $C$-motives $\mot(S)$ over $S$; see Definition \ref{DefCatCMotives}. This category is a slight modification of the category of Anderson $t$-motives. We proved that this category is semi-simple tannakian category, see \cite[Theorem 7.1]{CMot}. We define the category $\FG$-$\mot$ of $C$-motives with $\FG$-structure, in analogy with the category of motives with $G$-structure in the sense of \cite{Milne92}. We will see that this category admits crystalline (resp. \'etale) fiber functors at characteristic places (away from characteristic places).

		Let us fix a place $\nu\in C$ and a uniformizer $z:=z_\nu\in A_\nu=\hat{\CO}_{C,\nu}$. It yields canonical isomorphisms $A_\nu=\BF_\nu\dbl z\dbr$ and $Q_\nu=\BF_\nu\dpl z\dpr$. Let $L/\BF_\nu$ be a field extension. Let $\hat{\sigma}_\nu$ be the endomorphism of $L\dbl z\dbr=A_\nu\wh{\otimes}{_{\BF_\nu}} L$ which is the $\#\BF_\nu$-Frobenius $b\mapsto b^{\#{\BF_\nu}}=b^{q^{\deg\nu}}$ on $L$ and fixes $z$. Also let $L\dpl z\dpr=Q_\nu\wh{\otimes}{_{\BF_\nu}} L$ denote the fraction field of $L\dbl z\dbr$.

		\begin{definition}\label{Def_Crystal}
			Keep the above notation. 
			\begin{enumerate}
				\item
				A $\hat{\sigma}_\nu$-\emph{crystal} $\hat{\ul M}$ (resp. $\hat{\sigma}_\nu$-\emph{iso-crystal}) of rank $r$ over $L$ is a tuple $(\hat{M},\hat{\tau})$  (resp. $(\dot{\hat{M}},\hat{\tau})$ ) consisting of the following data
				
				\begin{enumerate}
					
					\item a free $L\dbl z\dbr$-modulue $\hat{M}$ (resp. an $L\dpl z\dpr$-vector space $\dot{\hat{M}}$) of rank $r$,
					
					\item an isomorphism $\hat{\tau}: \hat{\sigma}_\nu^\ast \hat{M}[1/z] \to \hat{M}[1/z]$ (resp. $\hat{\tau}: \hat{\sigma}_\nu^\ast \dot{\hat{M}} \to \dot{\hat{M}}$), where $\hat{M}[1/z]:=\hat{M}\otimes_{L\dbl z \dbr} L\dpl z \dpr$.

				\end{enumerate}
				A $\hat{\sigma}_\nu$-crystal is \emph{\'etale} if the isomorphism $\hat{\tau}: \hat{\sigma}_\nu^\ast \hat{M}[1/z] \to \hat{M}[1/z]$ comes from an isomorphism $\hat{\tau}: \hat{\sigma}_\nu^\ast \hat{M} \to \hat{M}$.\\ 
				
				\item A \emph{quasi-morphism} (resp. \emph{morphism}) between $\hat{\sigma}_\nu$-crystals $\hat{\ul M}:=(\hat{M},\hat{\tau})$ and $\hat{\ul M}':=(\hat{M}',\hat{\tau}')$ is a morphism $f:\hat{M}[1/z]\to \hat{M}'[1/z]$  (resp. $f:\hat{M}\to \hat{M}'$) such that $f\circ \hat{\tau}=\hat{\tau}' \circ \hat{\sigma}_\nu^\ast f$. We denote by $\hat{\sigma}_\nu$-$\textbf{CrystIso}(L)$ (resp. $\hat{\sigma}_\nu$-$\textbf{Cryst}(L)$) the $Q_\nu$-linear (resp. $A_\nu$-linear) category of $\hat{\sigma}_\nu$-crystals together with quasi-morphisms (resp. morphisms) as its morphisms. 

				\item
				A \emph{quasi-morphism} between $\hat{\sigma}_\nu$-iso-crystals $(\dot{\hat{M}},\hat{\tau})$ and $(\dot{\hat{M}}',\hat{\tau}')$ is a morphism $f:\dot{\hat{M}}\to \dot{\hat{M}}'$ such that $f\circ \hat{\tau}=\hat{\tau}' \circ \hat{\sigma}_\nu^\ast f$. We denote by $\hat{\sigma}_\nu$-$\textbf{IsoCryst}(L)$ the $Q_\nu$-linear category of $\hat{\sigma}_\nu$-iso-crystals together with quasi-morphisms as its morphisms.

				Note that we sometimes prefer not to specify any particular place $\nu\in C$, in this situation we omit the subscript $\nu$ from our notation.
				
			\end{enumerate}
			
		\end{definition}
		
		\noindent
		Below we recall the construction of the categories $\mot$ and $\FG$-$\mot$ from \cite{CMot}.

		\begin{definition}\label{DefCatCMotives}
			
			\begin{enumerate}
				\item
				Let $S$ be a scheme over $\BF_q$. A \emph{$C$-motive $\ul\CM$} with characteristic $\ul\nu=(\nu_1,\cdots,\nu_n)$ over $S$ is a tuple $(\CM,\tau_\CM)$ consisting of
				
				\begin{enumerate}
					\item \label{DefCatCMotives_i}
					a locally free sheaf $\CM$ of $\CO_{C_S}$-modules of finite rank,
					
					\item \label{DefCatCMotives_ii}
					an isomorphism $\tau_\CM: \sigma^\ast \dot{\CM} \to \dot{\CM}$ where $\dot{\CM}$ denotes the pullback of $\CM$ under the inclusion $\dot{C}_S\to C_S$, and $\sigma=id \times \sigma_S$ where $\sigma_S:S\to S$ is the absolute Frobenius morphism over $\BF_q$. Here $\dot{C}:=C\setminus\{\nu_1,\cdots,\nu_n\}$.   
				\end{enumerate}
				
				\item 
				A \emph{morphism} $\ul\CM\to\ul\CN$ is a homomorphism $f:\CM\to\CN$ of sheaves on $C_S$ such that $f\circ\tau_\CM|_{\dot{C}_S}=\tau_\CN|_{\dot{C}_S}\circ \sigma^\ast f$. The set of morphisms is denoted $\Hom_S(\ul\CM,\ul\CN)$. The set of \emph{quasi-morphisms} $\QHom (\ul\CM,\ul\CN)$ consists of the equivalence classes of the commutative diagrams 
				$$
				\CD
				\sigma^\ast \dot{\CM} @>\tau_\CM>> \dot{\CM}\\
				@V{\sigma^\ast f}VV @VVfV\\
				\sigma^\ast \dot{\CN}\otimes\CO(D_S)@>\tau_\CN>> \dot{\CN}\otimes \CO(D_S),
				\endCD  
				$$
				
				where $D$ is a divisor on $C$ and $D_S:=D\times_{\BF_q} S$, and two such diagrams for divisors $D$ and $D'$ are called equivalent provided that the corresponding diagrams agree when we tensor with $\CO(D_S+D_S')$. When $S=\Spec L$, for a field $L$, one can equivalently say that the set of quasi-morphisms $\QHom (\ul\CM,\ul\CN)$ is given by the following commutative diagrams 
				
				$$
				\CD
				\sigma^\ast \CM_\eta @>\tau_{\CM,\eta}>> \CM_\eta\\
				@V{\sigma^\ast f}VV @VVfV\\
				\sigma^\ast \CN_\eta@>\tau_{\CN,\eta}>> \CN_\eta.
				\endCD  
				$$
				
				Here $\CM_\eta$ denotes the pull back of $\CM$ under $\eta\times_{\BF_q} S \to C_S$.

				\item
				
				A \emph{quasi-isogeny} between $\ul \CM$ and $\ul \CN$ is a morphism in $\QHom (\ul\CM,\ul\CN)$ which admits an inverse. 
				
				\item
				We denote by $\mot (S)$ the $Q$-linear category whose objects are $C$-motives of characteristic $\ul\nu$ as above, with quasi-morphisms as its morphisms. We further denote by $\mot (S)^\circ$\forget{ (resp. $\mot (L)^\circ$)} the category obtained by restricting the set of morphisms to quasi-isogenies\forget{ (resp. isogenies)}. When $S=\Spec L$ we simply use the notation $\mot (L)$.
			\end{enumerate}

		\end{definition}

		One may endow the category of $C$-motives $\mot(S)$ with a $G$-structure. This leads to the following definition.
		
		\begin{definition}\label{DefGMotives}
			Let $\Rep \FG$ denote the category of representations of $\FG$ in finite free $\cO_C$-modules $\CV$.  By a \emph{$\FG$-motive} (resp. \emph{$G$-motive}) over $S$ we mean a tensor functor $\ul \CM_\FG: \Rep \FG \to \mot(S)$ (resp. $\ul \CM_G: \Rep_Q G \to \mot(S)$). We say that two $\FG$-motives (resp. $G$-motives) are \emph{isomorphic} if they are isomorphic as tensor functors. We denote the resulting category of $\FG$-motives (resp. $G$-motives) over $S$ by $\FG$-$\mot(S)$ (resp. $G$-$\mot(S)$).
		\end{definition}
		
		\noindent
		Note that the construction of the category
		$G$-$\mot(S)$ (resp. $\FG$-$\mot(S)$) is functorial in $G$ (resp. -$\FG$).
		
		\noindent
		Let us now recall the following definition of the moduli of global $\FG$-shtukas. 
		
		\begin{definition}\label{DefGlobal_Sht}
			\begin{enumerate}
				\item\label{DefGlobal_Sht_Obj}
				A \emph{global $\FG$-shtuka} $\ul\CG$ over an $\BF_q$-scheme $S$ is a tuple $(\CG,\ul\charsect,\tauGlob)$ consisting of 
				\begin{enumerate}
					\item[-]
					a $\FG$-bundle $\CG$ over $C_S$, 
					\item[-]
					an $n$-tuple $\ul\charsect$ of (characteristic) sections and 
					\item[-]
					an isomorphism $\tauGlob\colon  \s \CG|_{C_S\setminus \Gamma_{\ul\charsect}}\isoto \CG|_{C_S\setminus  \Gamma_{\ul\charsect}}$.

				\end{enumerate}
				
				We let $\nabla_n\scrH^1(C,\mathfrak{G})$ denote the stack whose $S$-points parameterizes global $\FG$-shtukas over $S$. This is an ind-Deligne-Mumford stack which is ind-separated and of ind-finite type, according to \cite[Theorem 3.15]{AH_Global}. Sometimes we will fix the sections $\ul s:=(\charsect_i)_i\in C^n(S)$ and simply call $\ul\CG=(\CG,\tauGlob)$ a global $\FG$-shtuka over $S$.

				\forget
				{
					--------------------------------
					\item\label{RemNablaHisIndDM}
					
					It can be shown that the stack $\nabla_n \scrH^1(C,\FG)$ is an ind-Deligne-Mumford stack over $C^n$ which is ind-separated and locally of ind-finite type. Indeed, a choice of faithful representation of $\FG$ in $\SL_r$ induces an ind-structure 
					$$
					\nabla_n \scrH^1(C,\FG):=\dirlim[\ul\omega]\nabla_n^\ul\omega \scrH^1(C,\FG)
					$$ 
					such that $\nabla_n^\ul\omega \scrH^1(C,\FG)$ are Deligne-Mumford. Here $\ul\omega$ runs over n-tuple of coweights of $\SL_r$. For the proof see \cite[Theorem 3.15]{AH_Global}. 
					----------------------------------
				}
				
				\item \label{DefGlobal_Sht_Mor}
				We let $\nabla_n\scrH^1(C,\FG)(S)_Q$ denote the category which has the same objects as $\nabla_n\scrH^1(C,\FG)(S)$, but the set of morphisms is enlarged to \emph{quasi-isogenies} of $\FG$-shtukas. A quasi-isogeny  $f:\ul\CG\to\ul\CG'$ is a commutative diagram
				
				$$
				\CD
				\CG@>f>>\CG'\\
				@A{\tau}AA @AA{\tau'}A\\
				\sigma^\ast\CG @>\sigma^\ast f >>\sigma^\ast\CG',
				\endCD
				$$
				defined outside $\Gamma_\ul s \cup D\times_{\BF_q}S$ for a closed subscheme $D\subseteq C$. Note that as a morphism of torsors $f$ is automatically an isomorphism. We denote by $QIsog_S(\ul\CG,\ul\CG')$ the set of quasi-isogenies between $\ul\CG$ and $\ul\CG'$.

				\item\label{DefGlobal_Sht_Completion}
				We denote by $\nabla_n\scrH^1(C,\FG)^{\ul\nu}$, the formal stack which is obtained by taking the formal completion of the stack $\nabla_n\scrH^1(C,\FG)$ at a fixed $n$-tuple of pairwise different characteristic places $\ul\nu=(\nu_1,\ldots,\nu_n)$ of $C$ in the sense of  \cite[Definition A.12]{Har1}. This means we let $A_{\ul\nu}$ be the completion of the local ring $\CO_{C^n,\ul\nu}$, and we consider global $\FG$-shtukas only over schemes $S$ whose characteristic morphism $S\to C^n$ factors through $\Nilp_{A_\ul\nu}$. We similarly denote by $\nabla_n\scrH^1(C,\FG)^{\ul\nu}(S)_Q$ the category over $S$, with morphisms enlarged to quasi-isogenies.

			\end{enumerate}
		\end{definition}

		In contrast with global situation one defines

		\begin{definition}\label{DefP-crystal}
			The category $\BP$-$\hat{\sigma}$-$\textbf{Cryst}_\BF(L)$ of $\BP$-$\hat{\sigma}$-crystals (or $\hat{\sigma}$-crystals with $\BP$-structure) over $L$ is the category whose objects are the tensor functors
			
			$$
			\Rep \BP \to \hat{\sigma}-\textbf{Cryst}_\BF(L),
			$$
			with natural transformations of functors as the set of morphisms. Similarlly we define the category of $P$-$\hat{\sigma}$-$\textbf{IsoCryst}_\BF(L)$ as the category whose objects are tensor functors from the representation category $\Rep P$ to the category $\hat{\sigma}$-$\textbf{IsoCryst}(L)$, with obvious set of morphisms.
			
		\end{definition}

		\begin{definition}\label{localSht}
			\begin{enumerate}
				\item
				A \emph{local $\BP$-shtuka} (resp. \emph{local $P$-isoshtuka} ) over $S\in \Nilp_{\BaseOfD\dbl\zeta\dbr}$ is a pair $\ul \CL = (\CL_+,\tau)$ (resp. $\ul \CL = (\CL,\tau)$) consisting of an $L^+\BP$-torsor (resp. $L\BP$-torsor) $\CL_+$ (resp. $\CL$) on $S$ and an isomorphism of the (associated) loop group torsors $\tauLoc\colon  \hat{\sigma}^\ast \CL \to\CL$. 
				
				\item
				A \emph{quasi-isogeny} $f\colon\ul\CL\to\ul\CL'$ between two local $\BP$-shtukas $\ul{\CL}:=(\CL_+,\tau)$ and $\ul{\CL}':=(\CL_+' ,\tau')$ over $S$ is an isomorphism of the associated $LP$-torsors $f \colon  \CL \to \CL'$ such that the following diagram  
				
				\[
				\xymatrix {
					\sigma^\ast\CL \ar[r]^{\tau} \ar[d]_{\sigma^\ast f} & \CL\ar[d]^f  \\
					\sigma^\ast\CL' \ar[r]^{\tau'}&  \CL' \;.
				}
				\]

				becomes commutative. For local $P$-isoshtukas, this can be defined similarly. 
				\item
				We denote by $\QIsog_S(\ul{\CL},\ul{\CL}')$ the set of quasi-isogenies between local $\BP$-shtukas (resp. $P$-isoshtukas) $\ul{\CL}$ and $\ul{\CL}'$ over $S$. We denote by $Sht_\BP(S)$ (resp. Iso--$Sht_P(S)$) the category of local $\BP$-shtukas (resp. $P$-isoshtukas) over $S$ with quasi-isogenies.

			\end{enumerate}
		\end{definition}

		\begin{remark}\label{RemFunctPSHtToPisoSHt}
			
			a) There is a fully faithful functor from category of local $\BP$-shtukas to the category of local $P$-isoshtukas, induced by the obvious morphism $$\breve{H}^1(S,L^+\BP)\to \breve{H}^1(S,LP)$$ corresponding to the inclusion $L^+\BP\into LP$. 
			
			b) Note that by tannakian formalism there is an equivalence of categories between the category of local $P$-isoshtukas over $L$ and the category $P$-$\hat{\sigma}$-$\textbf{IsoCryst}_\BF(L)$.
			Consequently there is the following diagram of functors
			
			\[
			\xymatrix {
				Sht_\BP(L) \ar[r] \ar[d] & \text{Iso-}Sht_P(S)\ar[d]^\cong  \\
				\BP\text{-}\hat{\sigma}\text{-}\textbf{Cryst}_\BF(L) \ar[r] &  P\text{-}\hat{\sigma}\text{-}\textbf{IsoCryst}_\BF(L) \;.
			}
			\]
			
		\end{remark}
		
		\forget{
			\begin{remark}\label{Rem_Rig_Loc}
				Like global $\FG$-shtukas, quasi-isogenies of local $\BP$-shtukas have the rigidity property. This means that they lift over infinitesimal thickenings; see \cite[Proposition 2.11]{AH_Local}.
			\end{remark}
		}

		\begin{lemma}\label{LemPSHtLocTrivForEtTop}
			Let $\BP$ (resp. $\FG$) be a smooth affine group scheme of finite type over $\BD$ (resp. $C$) with generic fiber $P$ (resp. $G$). Then 
			
			\begin{enumerate}
				
				\item 
				any local $\BP$-shtuka $\ul\CL=(\CL_+,\tau)$ over $S=\Spec R$, is \'etale locally on $S$ isomorphic to $(L^+\BP,b\hat{\sigma})_{S'}$ for an \'etale cover $S'\to S$.
				
				\item 
				any local $P$-isoshtuka $\ul\CL=(\CL,\tau)$ over $S$ with constant isogeny class $[b]$ is \'etale locally on $S$ isomorphic to  $(LP,b\hat{\sigma})_{S'}$ for an \'etale cover $S'\to S$.
				
				\item 
				Assume that $G$ is connected reductive, then any $\FG$-shtuka $\ul\CG$ over $\BF_q^\alg$ can be trivialized up to a quasi-isogeny.

			\end{enumerate}
			
		\end{lemma}

		\begin{proof}

			\begin{enumerate}
				
				\item The category of formal $\hat{\BP}$-torsors on $\hat{\BD}_R:=\Spf R\dbl z\dbr$, see \cite[Definition 2.2]{AH_Local}, with $\hat{\BP}$ the completion of $\BP$ along $V(z)$, is equivalent to the category of $L^+\BP$-torsors on $S$; see \cite[Proposition 2.4]{AH_Local}. Let $\wh\CP$ be a formal $\hat{\BP}$-torsor corresponding to $\CL^+$. Since $\BP$ is smooth, the torsor $\wh\CP\times_{\hat{D}_R} R\dbl z\dbr/z$ admits a section over some \'etale cover $S':=\Spec R'\to S$. Again by smoothness of $\BP$ the section lifts to a section of $\wh\CP\times_{\hat{\BD}_R} R'\dbl z\dbr/z^n$. This eventually gives a trivialization of $\wh\CP$, and therefore $\CL_+$, which induces a trivialization of $\ul\CL$. 
				
				\item
				This is \cite{H-KII} theorem 2.11.

				\item 
				Let $\BF_q^\alg(C)$ be the function field of a curve
				$C_{\BF_q^{alg}}$, defined over an algebraically closed field $\BF_q^{alg}$, for connected reductive group $G$ over $\BF_q^\alg(C)$, we have $H^1(\BF_q^\alg(C), G) = 0$. 
				Recall that this is the celebrated Steinberg’s Theorem \cite{Stein}, that was extended by Borel-Springer \cite{BS} to the
				case of arbitrary fields after restricting to connected reductive groups. In particular one observes that a $\FG$-shtuka $\ul\CG$ in $\nabla_n\scrH^1(C,\FG)(\BF_q^\alg)$ can be trivialized up to a quasi-isogeny.

			\end{enumerate}
			
		\end{proof}

		\begin{theorem}\label{ThmGroupTheoreticDescriptionOfIndexSet}
			
			Consider the category $\CM ot_C^\ul\nu(L)$ of $C$-motives over $L$, for $\ul\nu \subset C$ over ${\BF}_q^{\alg}$. We have the following statements
			
			\begin{enumerate}
				\item\label{ItemCMotisTannkianCat}
				It is non-neutral Tannakian category, with a fibre functor
				
				\begin{equation}\label{EqFibFunct}
					\omega: \CM ot_C^\ul\nu(L) \longrightarrow \textrm{Vec}_{Q_L}.
				\end{equation}
				Here $Q_L$ denote the function field of $C_L$.
				\item\label{ItemRealizationFunctorsBase}
				$\CM ot_C^\ul\nu(L)$ admits \'etale (resp. Crystalline ) realization functor 
				\begin{equation}\label{EqEtRealization}
					\omega^\nu(-):\mot(L)\to Q_\nu[\Gamma_L]\textbf{-modules}
				\end{equation}
				(resp. 
				
				\begin{equation}\label{EqCrystRealization}
					\omega_\nu(-):\mot(L)\longto\textbf{$\hat\sigma_\nu$-IsoCryst}(L)
				\end{equation}
				)
				at $\nu\in C\setminus \ul\nu$ (resp. $\nu\in\ul\nu$). Moreover the \'etale realization functor $\omega^\nu(-)$ is exact.

				\item\label{ItemTateConj}
				(Analog of Tate conjecture) Let $\ul\CM$ and $\ul\CM'$ be $C$-motives over a finite field $L$. Let $\nu$ be a closed point of $C$ away from the characteristic places $\nu_i$. Then there is an isomorphism
				
				$$
				\QHom_L(\ul\CM,\ul\CM')\otimes_Q Q_\nu \tilde{\longrightarrow} \Hom_{{Q_\nu}[\Gamma_L]}(\omega^\nu(\ul\CM),\omega^\nu(\ul\CM')). 
				$$
				Moreover if $\ul\CM=\ul\CM'$ then the above are ring isomorphisms.

				\item\label{ItemJannsenssemisimplicity} (Analog of Jannsen's semi-simplicity) The category $\mot:=\mot(\BF_q^\alg)$ is semi-simple.
				\item\label{ItemAnalogofHonda-Tate theory} (Analog of Honda-Tate theory)
				Set $W_\ul\nu=\{\alpha\in Q^\alg ; \nu(\alpha)= 0 ~\forall~ \nu\notin \ul\nu  \}$. There is a bijection 
				
				$$
				\text{set $\Sigma$ of simple objects in $\mot$} \leftrightarrow \text{elements of $\Gamma_Q\backslash W_\ul\nu\times \BN_{\geqslant 1}\slash\sim$}.
				$$
				
				Here $(\alpha,n)$ and $(\beta,m)$ are equivalent if $\alpha^{m.l}=\beta^{n.l}$  for some integer $l\in \BN_{\geqslant 1}$.
				
				
				\item\label{ItemExtension1}
				The Tannakian fundamental groupoid $\FP$ over $\Breve{Q} / Q$ of $\CM ot_C^{\ul\nu}$ is an extension
				\[
				1 \longrightarrow W_\ul\nu \longrightarrow \FP \longrightarrow \Gal (\Breve{Q} / Q) \longrightarrow 1\,.
				\]
				The kernel group $P:=\FP^{\Delta}$\forget{of the corresponding motivic groupoid $\FP:=\Aut^\otimes\bigl(\omega|\mot(\ol\BF_q)\bigr)$} is a pro-reductive group over $\Breve{Q}$.

				\forget{
					
					\item\label{ItemExtension2}
					The Tannakian fundamental groupoid $\FP_\nu$ over $\Breve{Q}_\nu / Q_\nu$ of $\hat{\sigma}$-$\textbf{Cryst}_{\BF_{\nu}}(\BF_q^{alg})$ is an extension
					\[
					1 \longrightarrow ??  \longrightarrow \FP_\nu \longrightarrow \Gal(\breve{Q}_\nu/ Q_\nu) \longrightarrow 1\,.
					\]
					The kernel group $P_\nu:=\FP_\nu^{\Delta}$\forget{of the corresponding motivic groupoid $\FP:=\Aut^\otimes\bigl(\omega\bigr)$} is a pro-reductive group over $\breve{Q}_{\nu}$.
				}

			\end{enumerate}

			
		\end{theorem}

		\begin{proof} \ref{ItemCMotisTannkianCat} One can see easily that the category is tannakian, see \cite[Proposition 2.4]{CMot}. It admits the following fibre functor over $\breve{Q}$
			
			\begin{equation*}
				\omega: \CM ot_C^\ul\nu \longrightarrow \textrm{Vec}_{\,{\BF}_q^{\alg} (C)},\quad \underline{\CG} \longmapsto \CG_{\ol\eta}.
			\end{equation*}

			where $\CG_\ol\eta := \CG \otimes_{{\cal O}_{C_{\BF_q^{\alg}}}} \breve{Q}$ is the generic fiber of $\CG$ on $C_{\BF_q^{\alg}}$.
			
			\ref{ItemRealizationFunctorsBase} For the construction of \'etale realization functor $\omega^\nu$ and crystalline realization functor $\omega_\nu$; see \cite[Section 2.1.2 and Section 2.1.3]{CMot}. For the exactness of the functor see  \cite[Proposition 2.1.4]{CMot}.
			
			\ref{ItemTateConj} This is proved in \cite{CMot}[Theorem 2.16].

			\ref{ItemJannsenssemisimplicity}  The idea is similar to the Jannsen's argument, based on studying the center of endomorphism algebra associated to a $C$-motive. We proved this semisimplicity result in \cite{CMot}[Theorem~7.1].

			\ref{ItemAnalogofHonda-Tate theory} We recall how one assigns an element of $\Gamma_Q\backslash W_\ul\nu\times \BN_{\geqslant 1}\slash\sim$
			to a $C$-motive. First notice that the category $\mot$ is direct limit of $\mot(E/\BF_q)$ for finite field extension $E/\BF_q$, see \cite{CMot}[Proposition 4.5].

			Let $\ul\CM:=(\CM,\tau_\CM)$ be a simple object in $\mot(\ol\BF_q)$. Suppose that it comes by base change from a $C$-motive in $\mot(E)$ for a finite extension $E/\BF_q$ of degree $n$. Let $\pi:=\pi_\ul\CM$ denote the corresponding Frobenius isogeny. Let $\alpha_\pi$ be a zero of the minimal polynomial $\mu_\pi$.  Then sending $\ul\CM$ to the pair $(\alpha,n)$ gives the assignment $\Sigma\to \Gamma_Q\backslash W_\ul\nu\times \BN_{\geqslant 1}\slash\sim$. To see that this assignment is one to one (resp. onto) see \cite{CMot}[Theorem 3.5] (resp. \cite{FelixThesis}[Theorem 3.12]).

			\ref{ItemExtension1} The first part follows from \cite[Theorem 1.5]{CMot}. The fact that the kernel group $P:=\FP^{\Delta}$ of the corresponding motivic groupoid $\FP:=\Aut^\otimes\bigl(\omega|\mot(\ol\BF_q)\bigr)$ is a pro-reductive group over $\breve{Q}$ now follows from \cite[Proposition 2.23]{De-Mi}.\\

		\end{proof}

		\begin{proposition}\label{PropEquivOfMotCats}
			Keep the notation in section \ref{SubsectNotationandConventions}. We have the following statements
			
			\begin{enumerate} 
				
				\item \label{ItemG-ShtToG-Mot}
				
				There is a functor

				\begin{equation}\label{Eq_FunctShtToGMot}
					\ul\CM_\FG(-): \nabla_n\scrH^1(C,\FG)^\ul\nu(L)\;\longto\; \FG\text{-}\mot(L), \qquad
					\ul\CG\mapsto \ul\CM_\FG(\ul\CG),
				\end{equation}
				
				see Definition \ref{DefGMotives} for the definition of the category of $\FG$-motives $\FG\text{-}\mot(L)$, which induces a fully faithful functor 
				
				\begin{equation}\label{Eq_FunctShtToGMotI}
					\ul\CM(-): \nabla_n\scrH^1(C,\FG)^\ul\nu(L)_Q\;\longto\; G\text{-}\mot(L), \qquad
					\ul\CG\mapsto \ul\CM(\ul\CG),
				\end{equation}
				
				\item\label{ItemRealizationA} The category  $\nabla_n\scrH^1(C,\FG)^{\ul\nu}(S)$ of $\FG$-shtukas over $S$ in $\Nilp_{A_{\ul\nu}}$ admits an \'etale realization functor
				
				\begin{eqnarray}\label{tatefunctor}
					\omega^\ul\nu(-)\colon \nabla_n\scrH^1(C,\FG)^{\ul\nu}(S) \;&\longto &\; Funct^\otimes (\Rep_{\BO^{\ul\nu}}\FG\,,\,\FM od_{\BA^{\ul\nu}[\pi_1^\et(S,\bar{s})]})\,\label{rationaltatefunctor}
				\end{eqnarray}
				
				For $S=\Spec L$ the above functor is induced by the obvious \'etale realization functor $\omega^\ul\nu(-)$ on $\FG\text{-}\mot(L)$ which is compatible with the one induced by (\ref{EqEtRealization}).
				
				\item\label{ItemRealizationB}
				
				There is a crystalline realization functor 
				
				\begin{equation}\label{Eq_CrystRealization}
					\omega_{\nu_i}(-)\colon \nabla_n\scrH^1(C,\FG)^{\ul\nu}(S)\to \Sht_{\BP_{\nu_i}}^{\Spec A_{\nu_i}}(S)
				\end{equation}
				
				which assigns to a global $\FG$-shtuka $\ul\CG$ over $S$ its local $\BP_{\nu_i}$-shtuka $\omega_{\nu_i}(\ul\CG)$ at $\nu_i$. Furthermore, for a field $L/\BF_{\nu_i}$ there is an equivalence of categories between $\Sht_{\BP_{\nu_i}}^{\Spec A_{\nu_i}}(L)$ and $\BP_{\nu_i}$-$\hat{\sigma}$-$\textbf{Cryst}_{\BF_{\nu_i}}(L)$. Consequently, the above functor is induced by the realization functor 
				
				\begin{equation}\label{Eq_FunctG-MotToP-Cryst}
					\FG\text{-}\mot(L)\to\BP_{\nu_i}\text{-}\hat{\sigma}\text{-}\textbf{Cryst}_{\BF_{\nu_i}}(L)
				\end{equation}
				induced by (\ref{EqCrystRealization}).
				
			\end{enumerate}

		\end{proposition}

		\begin{proof}
			\ref{ItemG-ShtToG-Mot}
			We recall the construction of the functor (\ref{Eq_FunctShtToGMot}). For this we recall that the assignment $\FG\mapsto\nabla_n\scrH^1(C,\FG)$ is functorial. Namely, a morphism $\iota:\FG\to\FG'$ gives rise to a morphism 
			
			$$
			\iota_\ast:\nabla_n\scrH^1(C,\FG)\to\nabla_n\scrH^1(C,\FG'), 
			$$
			which is induced by the push out morphism $\scrH^1(C,\FG)\to\scrH^1(C,\FG')$ defined by sending $\CG\to \CG\times_{\FG,\iota}\FG'$. Assuming that $\FG'=\GL(\CV)$ for a vector bundle $\CV$ over $C$, by functoriality we obtain $\iota_\ast: \scrH^1(C,\FG)\to\scrH^1(C,\GL(\CV))$. Using the equivalence of categories $\scrH^1(C,\FG')(S)=Vect_C(S)$, Where $Vect_C$ is the stack of vector bundles over $C$, we can view $\iota_\ast\ul\CG$ as a $C$-motive. Consequently, we obtain a functor
			
			$$
			\nabla_n\scrH^1(C,\FG)\to \FG\text{-}\mot(L).
			$$
			Let us now explain why the induced functor 
			$$
			\nabla_n\scrH^1(C,\FG)_Q\to G\text{-}\mot(L)
			$$
			is fully faithful. Let $\ul\CG$ and $\ul\CG'$ be two global $\FG$-shtukas in $\nabla_n\scrH^1(C,\FG)_Q$, and let $\phi:\CM(\ul\CG)\to\CM(\ul\CG')$ be a morphism between the corresponding objects in $G$-$\mot(L)$. The morphism $\phi$ induces a natural transformation 
			$$
			\omega(\phi): \omega\left(\CM(\ul\CG)(-)\right)\to \omega\left(\CM(\ul\CG')(-)\right)
			$$
			between the associated functors from $\Rep G$ to the category of $Q_L$-vector spaces. Set $F_{\ul\CG}:=\omega\left(\CM(\ul\CG)(-)\right)$. The Frobenius isogeny $\tau:\sigma^\ast\ul\CG\to\ul\CG$ (resp. $\tau':\sigma^\ast\ul\CG'\to\ul\CG'$) induces a morphism $F_{\tau}:F_{\sigma^\ast\ul\CG}\to F_{\ul\CG}$ (resp. $F_{\tau'}: F_{\sigma^\ast\ul\CG'}\to F_{\ul\CG'}$), with $\omega(\phi)\circ F_\tau=F_{\tau'}\circ \omega(\sigma^\ast (\phi))$. By Tannakian formalism we get a morphism $\phi_\eta:\CG_\eta\to\cG_\eta'$ of $G$-torsors over $Q_L$, such that $\phi_\eta\circ\tau_\eta=\tau_\eta'\circ\sigma^\ast\phi_\eta$. This extends to a quasi-isogeny $\ul\CG\to\ul\CG'$. 
			
			\ref{ItemRealizationA}
			For the construction of this functor see \cite[Chapter 6]{AH_Global}. There we used the notation $\check{\CV}_{-}$.
			
			\ref{ItemRealizationB} For the construction of the crystalline realization functor see \cite[Chapter 5]{AH_Local}. There we use the notation $\wh{\Gamma}_{\nu_i}(-)$ and it was called the global-local functor. To see the equivalence of the categories one proceeds in the following way.\\
			Let $B \subseteq GL_r$ be the Borel subgroup of upper triangular matrices and let $T$ be the torus of diagonal matrices. Then $X_\ast(T)= \BZ^r$ with simple coroots $e_i -e_{i+1}$ for $i = 1,...,r-1$. Also $X^\ast(T) = \BZ^r$. Let $\lambda_i = (1,...,1,0,... ,0)$ with multiplicities $i$ and $r-i$. The Weyl module $V (\lambda_1) =Ind_B^{GL_r} (-\lambda_1)_{dom}$ of highest weight $\lambda_1$ is simply the standard representation of $GL_r$ on the space of column vectors with $r$ rows, and $V (\lambda_i) = \wedge_i V (\lambda_1)$. For an $\BF_q$-scheme $S$ we have $L^+\GL_r(S) = \GL_r \Gamma(S,\CO_S)\dbl z\dbr$. There is an equivalence between the category of $L^+\GL_r$-torsors on $S$ and the category of sheaves of $\CO_S\dbl z\dbr$-modules which Zariski-locally on $S$ are free of rank r with isomorphisms as the only morphisms; see \cite[Lemma~4.2]{H-V}. According to this equivalence, one sends $\CL$ to the sheaf $\CL_{\lambda_1}$ corresponding to the following presheaf 
			$$
			Y \mapsto \left(\CL(Y ) \times \left(V(\lambda_1) \otimes_{\BF_q} \CO_S\dbl z\dbr(Y )\right)\right)/\GL_r(Y \dbl z \dbr); 
			$$
			Accordingly, the category of local $\GL_r$-shtukas over $\Spec L$ with quasi-isogenies as morphisms is equivalent to the subcategory of rank $r$ $\hat{\sigma}$-crystals in $\textbf{$\hat{\sigma}$-Cryst}(L)$. 

			A morphism $\rho: \BP\to \BP'$ induces a morphism $L^+\BP\to L^+\BP'$. This gives the functor
			$$
			\rho_\ast: \scrH^1(\Spec \BF, L^+\BP)\to\scrH^1(\Spec \BF, L^+\BP).
			$$
			The latter further induces
			$$
			\rho_\ast: Sht_\BP(S)\to Sht_{\BP'}(S),~\ul\CL\mapsto \rho_\ast \ul\CL.
			$$
			In particular from the above explanation, we can define the following pairing
			\begin{equation}\label{EqPshtandPMot}
				Sht_\BP(L) \times \Rep_{\BF\dbl z \dbr} \BP \to \textbf{$\hat{\sigma}$-Cryst}(L),~\ul\CL\times \rho\mapsto \rho_\ast\ul\CL.
			\end{equation}
			In other words a local $\BP$-shtuka $\ul\CL$  gives rise to a $\BP$-$\hat{\sigma}$-crystal over $L$. Conversely, given a functor
			
			\begin{equation}
				\Rep_{\BF\dbl z \dbr} \BP \to \textbf{$\hat{\sigma}$-Cryst},~ \rho\mapsto (\hat{M}_\rho,\hat{\tau}_\rho:\sigma^\ast \hat{M}_\rho[1/z]\to \hat{M}_\rho[1/z])
			\end{equation}
			we proceed in the following way. Namely, consider the functor $\Rep_{\BF\dbl z \dbr} \BP \to L\dbl z\dbr/z^n\text{-modules}$, defined by sending $\rho$ to $\hat{M}_\rho\otimes L\dbl z \dbr/z^n$. By tannakian formalism this gives a $\BP_n$-torsor $\hat{\CP}_n$, over $\BD_{n,L}:=\Spec L\dbl z\dbr/z^n$, where $\BP_n:=\BP\times_\BD \BF\dbl z\dbr/z^n$. Therefore we obtain a formal $\hat{\BP}$-torsor $\hat{\CP}$ over $\hat{\BD}_L$, which by the equivalence of the categories \cite[Proposition 2.4]{AH_Local} gives a $L^+\BP$-torsor $\CL_+$ over $\Spec L$. Let $\CL$ denote the corresponding $LP$-torsor. We may view $\CL$ as the functor 
			$$\Rep P\to L\dpl z \dpr\text{-vector spaces},$$
			given by $\rho\mapsto \hat{M}_\rho\otimes_L L\dpl z \dpr$, and the Frobenius morphisms $\hat{\tau}_\rho$ as a natural transformation of the functors $\hat{\sigma}^\ast\CL\to \CL$. This gives the local $\BP$-shtuka $\ul\CL$. 
			
		\end{proof}

		\begin{remark}
			Note that the functor \ref{Eq_FunctShtToGMot} is not essential surjective. See the remark \ref{RemGeometricisGroupTheoretic} below.
		\end{remark}

		\begin{remark}\label{RemarkRealizationOfGMotives}
			There is a realization functor
			$$
			\omega^\ul\nu(-): \FG\text{-}\mot(L)\to Funct^\otimes (\Rep_{\BO^{\ul\nu}}\FG\,,\,\FM od_{\BA^{\ul\nu}[\Gamma_L]}),
			$$
			
			induced by \ref{EqEtRealization}. Note that by construction, for a $\FG$-shtuka $\ul\CG$, the associated functors $\omega^\ul\nu(\ul\CG)$, and $\omega^\ul\nu(\ul\CM_\FG(\ul\CG))$, see \ref{Eq_FunctShtToGMot}, coincide.
			
		\end{remark}
		
		The above proposition \ref{PropEquivOfMotCats} has the following obvious corollary.
		
		\begin{corollary}\label{CorQIsogEqAut}
			There is an isomorphism $QIsog(\ul\CG)\cong \Aut(\ul\CM(\ul\CG))$.
		\end{corollary}
		
		\begin{remark}\label{RemarkEquivOFMotCats}
			\begin{enumerate}
				\item\label{ItemDualTannakianCat1}
				Consider the category $\FG\text{-}\breve{\CM ot} (\BF_q^{\alg})$ whose objects are morphisms of groupoids 
				\begin{equation*}
					\varphi : \FP \longrightarrow \FG \ \times_C \ \Spec\bigl( \breve{Q} \otimes_Q \breve{Q}\bigr)
				\end{equation*}
				with obvious set of morphisms. Regarding the fiber functor \ref{EqFibFunct}, the functor \ref{Eq_FunctShtToGMot} induces a fully faithful functor

				\begin{equation}\label{EqGShtAndGerbs}
					\varphi_{-}: \nabla_n\scrH^1(C,\FG)^\ul\nu(\BF_q^{\alg})_Q\to \FG\text{-}\breve{\CM ot} (\BF_q^{\alg}).
				\end{equation}

				which assigns $\varphi_{\ul{\CG}}$ to a $\FG$-shtuka $\ul\CG$. Here $\nabla_n\scrH^1(C,\FG)^\ul\nu(\BF_q^{\alg})_Q$ is the category   $\nabla_n\scrH^1(C,\FG)^\ul\nu(\BF_q^{\alg})$ with set of morphisms extended to quasi-isogenies of $\FG$-shtukas.  In particular $QIsog(\ul\CG)\cong \Aut (\varphi_{\ul\CG})$. The realization functors in \ref{ItemRealizationA} and \ref{ItemRealizationB} induce the corresponding realization functors $\omega^\ul\nu(-)$ and $\omega_\nu(-)$ on $\FG-\breve{\CM ot} (\BF_q^{\alg})$.

				\item\label{ItemDualTannakianCat2}
				Consider the category $\BP_\nu\text{-}\hat{\sigma}\text{-}\breve{\textbf{Cryst}}_{\BF_{\nu}}(\BF_q^{alg})$ whose objects are morphisms of groupoids
				$$
				\FP_\nu\to \BP_\nu \times_{A_\nu} (\Spec \breve{Q}_\nu\times_{Q_\nu}\Spec \breve{Q}_\nu).
				$$
				
				Here $\FP_\nu$ is the Tannakian fundamental groupoid of the category $\hat{\sigma}$-$\textbf{Cryst}_{\BF_{\nu}}(\BF_q^{alg})$. When we do not want to specify the place $\nu\in C$, we omit the subscript $\nu$ from our notation. Note that this category is equivalent to the category of local $\BP$-shtukas over $\BF_q^\alg$. 
				
				\item\label{ItemCommuteDiageram}
				
				There is the following diagram 
				$$
				\CD
				\nabla\scrH^1(C,\FG)(L)@>{\ul\CM_\FG^{-}}>>\FG\text{-}\mot(L)\\
				@V{\omega_\nu(-)}VV @VVV\\
				Sht_{\BP_\nu}^{\Spec A_\nu}(L)@>>> \BP_\nu\text{- $\hat{\sigma}$-}\textbf{Cryst}_{\BF_{\nu}}(L)
				\endCD
				$$
				concerning the global-local compatibility of the functors
			\end{enumerate}
			
		\end{remark}

		\section{Langlands-Rapoport Conjecture and Special Points}\label{Sect_Proof_Of_LR_Conj}

		\subsection{Boundedness Condition}

		Fix an algebraic closure $\BaseOfD\dpl\zeta\dpr^\alg$ of $\BaseOfD\dpl\zeta\dpr$. For a finite extensions of discrete valuation rings $R/\BaseOfD\dbl\zeta\dbr$ with $R\subset\BaseOfD\dpl\zeta\dpr^\alg$, we denote by $\kappa_R$ its residue field, and we let $\Nilp_R$ be the category of $R$-schemes on which $\zeta$ is locally nilpotent. We also set $\wh{\SpaceFl}_{\BP,R}:=\SpaceFl_\BP\whtimes_{\BaseOfD}\Spf R$ and $\wh{\SpaceFl}_\BP:=\wh{\SpaceFl}_{\BP,\BaseOfD\dbl\zeta\dbr}$. Let us consider the following functor
		
		\begin{eqnarray}\label{eqPAFlag}
			\ul{\Breve\CM}:&(\Nilp_R)^o &\longto  \Sets  \hspace{7cm}\vspace{-2mm}\\ \nonumber
			&\SSS &\longmapsto  \big\{\text{Isomorphism classes of }(\CL_+,\delta);\text{where: }\\ \nonumber  
			& &~~~~~~~~~~~~~~~~~~~~~~~~~~~~-\CL_+~\text{is an $L^+\BP$-torsor over $\SSS$ and}\\ \nonumber
			& &  ~~~~~~~~~~~~~~~~~~~~~~~~~~~~ -\text{a trivialization $\delta\colon  \CL \to LP_S$ of the}\\ \nonumber
			& & ~~~~~~~~~~~~~~~~~~~~~~~~~~~~~~~~~\text{associated loop torsors}
			\big\}. 
		\end{eqnarray}

		\begin{proposition}\label{PropFlRepUnBoundedRZ}
			The ind-scheme $\wh{\CF\ell}_{\BP,R}$ pro-represents the above functor.

		\end{proposition}
		
		\begin{proof}
			In order to illustrate how the representablity works, here we briefly sketch the proof, and we refer the reader to \cite[Theorem~4.4.]{AH_Local} for further details. We assume that $R=\BF\dbl\zeta\dbr$. Consider a pair $(\CL_+,\delta)\in \ul{\Breve\CM}(S)$. Choose an \fppf-covering $S' \to S$ which trivializes $\CL_+$, then the morphism $\delta$ is given by an element $g' \in LP(S')$. The image of the element $g' \in LP(S')$ under  $LP(S')\to\wh{\CF\ell}_\BP (S')$ is independent of the choice of the trivialization, and since $(\CL_+,\delta)$ is defined over $S$, it descends to a point $x \in \wh{\CF\ell}_\BP$. 
			
			Conversely let $x$ be in $\wh{\CF\ell}_\BP(S)$, for a scheme $S\in \Nilp_{\BF\dbl\zeta\dbr}$. The projection morphism $LP \to \CF\ell_\BP$ admits local sections for the \'etale topology by \cite[Theorem~1.4]{PR2}. Hence over an \'etale covering $S' \to S$ the point $x$ can be represented by an element $g'\in LP(S')$. We let $(\CL_+',\delta')=((L^+\BP)_{S'},g')$. It can be shown that it descends and gives $(\CL_+,\delta)$ over $S$.
		\end{proof}

		Now let us recall the notion of boundedness condition for local $\BP$-shtukas from \cite{AH_Local}. 

		\begin{definition}\label{DefEqClClosedInd}
			(a) For a finite extension $\BaseOfD\dbl\zeta\dbr\subset R\subset\BaseOfD\dpl\zeta\dpr^\alg$ of discrete valuation rings we consider closed ind-subschemes $\hat{Z}_R\subset\wh{\SpaceFl}_{\BP,R}$. We call two closed ind-subschemes $\hat{Z}_R\subset\wh{\SpaceFl}_{\BP,R}$ and $\hat{Z}'_{R'}\subset\wh{\SpaceFl}_{\BP,R'}$ \emph{equivalent} if there is a finite extension of discrete valuation rings $\BaseOfD\dbl\zeta\dbr\subset\wt R\subset\BaseOfD\dpl\zeta\dpr^\alg$ containing $R$ and $R'$ such that $\hat{Z}_R\whtimes_{\Spf R}\Spf\wt R \,=\,\hat{Z}'_{R'}\whtimes_{\Spf R'}\Spf\wt R$ as closed ind-subschemes of $\wh{\SpaceFl}_{\BP,\wt R}$.
			
			\medskip\noindent
			(b) Let $\hat{Z}=[\hat{Z}_R]$ be an equivalence class of closed ind-subschemes $\hat{Z}_R\subset\wh{\SpaceFl}_{\BP,R}$ and let $G_{\hat{Z}}:=\{\gamma\in\Aut_{\BaseOfD\dbl\zeta\dbr}(\BaseOfD\dpl\zeta\dpr^\alg)\colon \gamma(\hat{Z})=\hat{Z}\,\}$. We define the \emph{ring of definition $R_{\hat{Z}}$ of $\hat{Z}$} as the intersection of the fixed field of $G_{\hat{Z}}$ in $\BaseOfD\dpl\zeta\dpr^\alg$ with all the finite extensions $R\subset\BaseOfD\dpl\zeta\dpr^\alg$ of $\BaseOfD\dbl\zeta\dbr$ over which a representative $\hat{Z}_R$ of $\hat{Z}$ exists.
		\end{definition}
		
		For further explanation about the above definition see \cite[Remark~4.6 and Remark 4.7]{AH_Local}
		
		
		\begin{definition}\label{DefBDLocal}
			\begin{enumerate}
				\item \label{DefBDLocal_A}
				We define a \emph{bound} to be an equivalence class $\hat{Z}:=[\hat{Z}_R]$ of closed ind-subschemes $\hat{Z}_R\subset\wh{\SpaceFl}_{\BP,R}$, such that for all $R$ the ind-subscheme $\hat{Z}_R$ is stable under the left $L^+\BP$-action on $\SpaceFl_\BP$, and the special fiber $Z_R:=\hat{Z}_R\whtimes_{\Spf R}\Spec\kappa_R$ is a quasi-compact subscheme of $\SpaceFl_\BP\whtimes_{\BaseOfD}\Spec\kappa_R$. The ring of definition $R_{\hat{Z}}$ of $\hat{Z}$ is called the \emph{reflex ring} of $\hat{Z}$. Since the Galois descent for closed ind-subschemes of $\SpaceFl_\BP$ is effective, the $Z_R$ arise by base change from a unique closed subscheme $Z\subset\SpaceFl_\BP\whtimes_\BaseOfD\kappa_{R_{\hat{Z}}}$. We call $Z$ the \emph{special fiber} of the bound $\hat{Z}$. It is a projective scheme over $\kappa_{R_{\hat{Z}}}$ by Remark~\cite[Remark 4.3]{AH_Local} and \cite[Lemma~5.4]{H-V}, which implies that every morphism from a quasi-compact scheme to an ind-projective ind-scheme factors through a projective subscheme.
				\item \label{DefBDLocal_B}
				Let $\hat{Z}$ be a bound with reflex ring $R_{\hat{Z}}$. Let $\CL_+$ and $\CL_+'$ be $L^+\BP$-torsors over a scheme $S$ in $\Nilp_{R_{\hat{Z}}}$ and let $\delta\colon \CL\isoto\CL'$ be an isomorphism of the associated $L\genericG$-torsors. We consider an \'etale covering $S'\to S$ over which trivializations $\alpha\colon\CL_+\isoto(L^+\BP)_{S'}$ and $\alpha'\colon\CL'_+\isoto(L^+\BP)_{S'}$ exist. Then the automorphism $\alpha'\circ\delta\circ\alpha^{-1}$ of $(L\genericG)_{S'}$ corresponds to a morphism $S'\to L\genericG\whtimes_\BaseOfD\Spf R_{\hat{Z}}$. We say that $\delta$ is \emph{bounded by $\hat{Z}$} if for any such trivialization and for all finite extensions $R$ of $\BaseOfD\dbl\zeta\dbr$ over which a representative $\hat{Z}_R$ of $\hat{Z}$ exists the induced morphism 
				\[
				S'\whtimes_{R_{\hat{Z}}}\Spf R\to L\genericG\whtimes_\BaseOfD\Spf R\to \wh{\SpaceFl}_{\BP,R}
				\]
				factors through $\hat{Z}_R$. Furthermore we say that a local $\BP$-shtuka $(\CL_+, \tauLoc)$ is \emph{bounded by $\hat{Z}$} if the isomorphism $\tauLoc$ is bounded by $\hat{Z}$. 
			\end{enumerate}
		\end{definition}
		
		\begin{remark}\label{RemBound}
			Note that the condition of Definition~\ref{DefBDLocal}\ref{DefBDLocal_B} is satisfied for \emph{all} trivializations and for \emph{all} such finite extensions $R$ of $\BaseOfD\dbl\zeta\dbr$ if and only if it is satisfied for \emph{one} trivialization and for \emph{one} such finite extension \cite[Remark 4.9]{AH_Local}. 
		\end{remark}

		\subsection{Statement and the proof of the theorem}
		
		Before stating the main theorem, we need to define the notion of admissibility and speciality of $\FG$-shtukas ($\FG$-motives).
		
		\subsubsection{Admissible and Special $\FG$-Shtukas }\label{SubsectAdmandSpe}

		\begin{definition}\label{DefIwahori-Weyl}
			Assume that the generic fiber $\genericG$ of $\BP$ over $\Spec\BaseOfD\dpl z\dpr$ is connected reductive. Consider the base change $\genericG_L$ of $\genericG$ to $L=\BaseOfD^\alg\dpl z\dpr$. Let $S$ be a maximal split torus in $\genericG_L$ and let $T$ be its centralizer. Since $\BaseOfD^\alg$ is algebraically closed, $\genericG_L$ is quasi-split and so $T$ is a maximal torus in $\genericG_L$. Let $N = N(T)$ be the normalizer of $T$ and let $\CT^0$ be the identity component of the N\'eron model of $T$ over $\CO_L=\BaseOfD^\alg\dbl z\dbr$.
			
			The \emph{Iwahori-Weyl group} associated with $S$ is the quotient group $\wt{W}= N(L)\slash\CT^0(\CO_L)$. It is an extension of the finite Weyl group $W_0 = N(L)/T(L)$ by the coinvariants $X_\ast(T)_I$ under $I=\Gal(L^\sep/L)$:
			$$
			0 \to X_\ast(T)_I \to \wt W \to W_0 \to 1.
			$$
			By \cite[Proposition~8]{H-R1} there is a bijection
			\begin{equation}\label{EqSchubertCell}
				L^+\BP(\BaseOfD^\alg)\backslash L\genericG(\BaseOfD^\alg)/L^+\BP(\BaseOfD^\alg) \isoto \wt{W}^\BP  \backslash \wt{W}\slash \wt{W}^\BP
			\end{equation}
			where $\wt{W}^\BP := (N(L)\cap \BP(\CO_L))\slash \CT^0(\CO_L)$, and where $LP(R)=P(R\dbl z\dbr)$ and $L^+\BP(R)=\BP(R\dbl z\dbr)$ are the loop group, resp.\ the group of positive loops of $\BP$; see \cite[\S\,1.a]{PR2}, or \cite[\S4.5]{B-D}, \cite{Ngo-Polo} and \cite{Faltings03} when $\BP$ is constant.
			
		\end{definition}

		\begin{definition}\label{DefAdm}
			
			Let $\omega\in \wt{W}^\BP\backslash \wt{W}/\wt{W}^\BP$ and let $\BaseOfD_\omega$ be the fixed field in $\BaseOfD^\alg$ of $\{\gamma\in\Gal(\BaseOfD^\alg/\BaseOfD)\colon \gamma(\omega)=\omega\}$. There is a representative $g_\omega\in L\genericG(\BaseOfD_\omega)$ of $\omega$; see \cite[Example~4.12]{AH_Local}. The \emph{Schubert variety} $\CS(\omega)$ associated with $\omega$ is the ind-scheme theoretic closure of the $L^+\BP$-orbit of $g_\omega$ in $\SpaceFl_\BP\whtimes_{\BaseOfD}\BaseOfD_\omega$. It is a reduced projective variety over $\BaseOfD_\omega$. For further details see \cite{PR2} and \cite{Richarz}.

		\end{definition}

		\begin{definition}
			Define
			
			$$
			Adm(\hat{Z}_\nu)=\{\omega\in W_{K_\nu}\backslash W_\nu \slash W_{K_\nu} ; \CS(\omega)\subseteq Z_\nu\times{\kappa_{\hat{Z}_\nu}}\BF_q^\alg\}.
			$$
			
		\end{definition}

		\begin{definition}\label{ADLV}

			Let $\hat{Z}$ be a bound with reflex ring $R_{\hat{Z}}=\kappa\dbl\xi\dbr$ and special fiber $Z\subset\SpaceFl_\BP\whtimes_\BaseOfD\Spec\kappa$; see Definition~\ref{DefBDLocal}. Let $\ul\CL_0=(\CL_{0+},\hat{\tau}_0)$ be a local $\BP$-isoshtuka over a field $\BaseFldOfLocSht$ in $\Nilp_{\BaseOfD\dbl\zeta\dbr}$. Define the associated \emph{affine Deligne-Lusztig variety} $\Breve{X}_Z(\ul\CL_0)$ as the reduced closed ind-subscheme $\Breve{X}_Z(\ul\CL_0)\subset\SpaceFl_\BP\whtimes_\BaseOfD\Spec\BaseFldOfLocSht$ whose $K$-valued points (for any field extension $K$ of $\BaseFldOfLocSht$) are given by
			$$
			\Breve{X}_Z(\ul\CL_0)(K):=\big\{ (\ul\CL,\delta) \in \SpaceFl_\BP(K)\colon \delta^{-1}\,\tau_0\,\hat\sigma^\ast(\delta) \text{~induces a point in}~ Z(K)\big\},
			$$
			see Proposition \ref{PropFlRepUnBoundedRZ}. Similarly for an object $\hat{\ul\CM}$ in the category $\BP$-$\hat{\sigma}$-$\textbf{Cryst}_{\BF}(L)$ (resp. $\hat{\phi}$ in $\hat{\sigma}$-$\breve{\textbf{Cryst}}_{\BF_{\nu}}(\BF_q^{alg})$) we denote by $\Breve{X}_Z(\hat{\CM})$ (resp. $\Breve{X}_Z(\hat{\phi})$) the affine Deligne-Lusztig variety $\Breve{X}_Z(\ul\CL(\hat{\CM}))$ (resp. $\Breve{X}_Z(\ul\CL(\phi))$), where $\ul\CL(\hat{\CM})$   (resp. $\ul\CL(\phi)$) denote the corresponding local $\BP$-isoshtuka; See remark \ref{RemFunctPSHtToPisoSHt} b) and remark \ref{RemarkEquivOFMotCats}.  
			
			Note that when $\ul\BL=(LP,b\hat{\sigma}^*)$ is a trivialized local $P$-isoshtuka over a field $\BaseFldOfLocSht$ in $\Nilp_{\BaseOfD\dbl\zeta\dbr}$, the associated \emph{affine Deligne-Lusztig variety} $\Breve{X}_Z(\ul\BL)$ is the reduced closed ind-subscheme $\Breve{X}_Z(b)\subset\SpaceFl_\BP\whtimes_\BaseOfD\Spec\BaseFldOfLocSht$ whose $K$-valued points (for any field extension $K$ of $\BaseFldOfLocSht$) are given by
			$$
			\Breve{X}_Z(\ul\BL)(K)=\Breve{X}_Z(b)(K):=\big\{ g\in \SpaceFl_\BP(K)\colon g^{-1}\,b\,\hat\sigma^\ast(g) \in Z(K)\big\}.
			$$
			We set $\Breve{X}_{\preceq\omega}(b):=\Breve{X}_{\CS(\omega)}(b)$ if $Z$ is the affine Schubert variety $\CS(\omega)$ with $\omega \in \wt{W}$, see \cite{PR2}.
		\end{definition}

		\begin{definition}\label{Def_AdmLocalP-sht}
			
			Let $(\BP,\hat{Z})$ be a tuple of data consisting of the group $\BP$ with generic fiber $P$ and the bound $\hat{Z}$. We say that a local $P$-isoshtuka $\ul\CL$ is admissible if the associated affine Deligne-Lusztig variety $\Breve{X}_Z(\ul\CL)$ is nonempty. We say that $\ul\CL$ in the category of local $\BP$-shtukas (resp. $\hat{\ul\CM}$ in $\BP$-$\hat{\sigma}$-$\textbf{Cryst}_{\BF}(\BF_q^{alg})$, resp. $\hat{\phi}$ in $\BP\text{-}\hat{\sigma}\text{-}\breve{\textbf{Cryst}}_{\BF_{\nu}}(\BF_q^{alg})$)) is admissible if the corresponding object in the category of local $P$-isoshtukas is admissible.
		\end{definition}

		\begin{definition}[Special $\FG$-shtuka]\label{DefSpecialG-shtuka}
			\begin{enumerate}
				\item 
				A $\FG$-shtuka $\ul\CG$ over $S$ is called \emph{special} if its quasi-isogeny class has a representative in the image of $i_\ast: \nabla_n\scrH^1(C,\FT)(S)\to \nabla_n\scrH^1(C,\FG)(S)$ for some inclusion $i$ of a torus $\FT$ into $\FG$.
				\item
				Similarly, we say that the $G$-motive $\CM_{G}(-): \Rep_Q G\to \mot(L)$ is \emph{special} if there exist a $T$-motive $\CM_T(-): \Rep_Q T \to \mot(L)$ for a torus $T\subseteq G$, such that $\CM_{G}$ and $\CM_{T}\circ r$ are naturally isomorphic functors, where $r: \Rep_Q G\to \Rep_Q T$ is induced by the inclusion $T\subseteq G$.
				
			\end{enumerate}
		\end{definition}
		
		\begin{lemma}
			
			A $\FG$-shtuka $\ul\CG$ in $\nabla_n\scrH^1(C,\FG)(L)$ is special if and only if the associated $G$-motive under \ref{Eq_FunctShtToGMot} is special.
			
		\end{lemma}
		
		\begin{proof}
			The statement easily follows from the Tannakian formalism.
		\end{proof}
		
		\noindent

		Let $B(G)$ denote the $\sigma$-conjugacy classes in $G(\breve{Q})$, where $\breve{Q}=Q\otimes_{\BF_q}\BF_q^{\alg}$.  We recall that according to the Kottwitz description of $B(G)$ for local and global fields in terms of Galois gerbs in
		\cite{Ko5}, one assigns the two invariants $\nu_G$ and $\kappa_G$ two the elements of $B(G)$.
		Here we recall the following result of Hamacher and Kim \cite[Theorem 1.3]{H-K}.
		
		\begin{theorem}\label{ThmHamacher-Kim} For reductive group $G$ we have the following statements 
			\begin{enumerate}
				\item 
				Every $b\in B(G)$ is uniquely determined by Kottwitz invariants $\kappa_G(b)$ and $\nu_G(b)$.
				
				\item
				When $G$ is quasi-split, the canonical map
				\[
				\bigcup_{~~~~~~~~~~T\subseteq G ~ \text{max. torus}}B(T)\to B(G)
				\]
				is surjective. 
			\end{enumerate}

		\end{theorem}

		\begin{remark}\label{Rem_UnBD_RZ_space}
			
			For a scheme $\SSS$ in $\Nilp_{\BaseOfD\dbl\zeta\dbr}$ let $\bar{\SSS}$ denote the closed subscheme $\Var_\SSS(\zeta)\subseteq \SSS$. On the other hand for a scheme $\bar \TTT$ over $\BaseOfD$ we set $\wh{\TTT}:=\bar \TTT\whtimes_{\Spec\BaseOfD}\Spf\BaseOfD\dbl\zeta\dbr$. Then $\wh \TTT$ is a $\zeta$-adic formal scheme with underlying topological space $\bar \TTT=\Var_{\wh \TTT}(\zeta)$.
			Recall that to a given local $\BP$-shtuka $\ul\BL$ over an $\BaseOfD$-scheme $\bar{\TTT}$ we associate the functor
			\begin{eqnarray*}
				\Breve{\CM}_{\ul\BL}\colon  \Nilp_{\wh{\TTT}} &\to&  \Sets\\
				\SSS &\mapsto & \big\{\text{Isomorphism classes of }(\ul{\CL},\bar{\delta})\colon\;\text{where }\ul{\CL}~\text{is a local $\BP$-shtuka}\\ 
				&&~~~~ \text{over $\SSS$ and }\bar{\delta}\colon  \ul{\CL}_{\bar{\SSS}}\to \ul\BL_{\bar{\SSS}}~\text{is a quasi-isogeny  over $\bar{\SSS}$}\big\}. 
			\end{eqnarray*}
			Here we say that $(\ul\CL,\bar\ppsi)$ and $(\ul\CL',\bar\ppsi')$ are isomorphic if $\bar\ppsi^{-1}\circ\bar\ppsi'$ lifts to an isomorphism $\ul\CL'\to\ul\CL$. Note that the group $\QIsog_{\bar \TTT}(\ul\BL)$ of quasi-isogenies of $\ul\BL$ acts on the functor $\Breve{\CM}_{\ul\BL}$ via $j\colon(\ul\CL,\bar\delta)\mapsto(\ul\CL,j\circ\bar\delta)$ for $j\in\QIsog_{\bar \TTT}(\ul\BL)$.
			
			Recall from \cite[Proposition~4.11]{AH_Local} that the above functor $\Breve{\CM}_{\ul\BL}$ is represented by an ind-scheme, ind-quasi-projective, hence ind-separated and of ind-finite type over $\wh \TTT=\bar \TTT\whtimes_{\BaseOfD}\Spf\BaseOfD\dbl\zeta\dbr$. If $\BP$ is parahoric, then $\Breve{\CM}_{\ul\BL}$ is ind-projective over $\wh \TTT$. Moreover, if $\ul\BL$ is trivialized by an isomorphism $\alpha\colon\ul\BL\isoto\bigl((L^+\BP)_{\bar \TTT},b\hat\sigma^*\bigr)$ over $\bar\TTT$ with $b\in L\genericG(\bar \TTT)$ then $\Breve{\CM}_{\ul\BL}$ is represented by the ind-scheme $\wh\SpaceFl_{\BP,\wh\TTT}:=\SpaceFl_\BP\whtimes_\BaseOfD\wh \TTT$.
			
		\end{remark}

		\subsubsection{$\nabla\scrH$-data and the functor $\CL^\ast(-)$}

		Let us first recall the following definition for the level $H$-structure, for a compact open subgroup $H\subseteq \FG(\BA^\ul\nu)$.

		\begin{definition}\label{level structure}
			Let $\omega^\circ\colon \Rep_{\BO^{\ul\nu}}\FG \to \FM od_{\BA^{\ul\nu}}$ denote the forgetful functor. For a global $\FG$-shtuka $\ul\CG$ over $S$, consider the set of isomorphisms of tensor functors $\Isom^{\otimes}(\omega_{\BO^{\ul\nu}}^\circ,\omega^\ul\nu(\ul{\CG}))$. Note that this is non-empty, see \cite[Lemma~6.2]{AH_Global} and $\FG(\BA^{\ul\nu})=\Aut^\otimes(\omega^\circ)$. The set $\Isom^{\otimes}(\omega_{\BO^{\ul\nu}}^\circ,\omega^\ul\nu(\ul{\CG}))$ admits an action of $\pi_1^\et(S,\bar{s})\times\FG(\BA^{\ul\nu})$ where $\FG(\BA^{\ul\nu})$ acts through $\omega^\circ$ and $\pi_1^\et(S,\bar{s})$ acts through $\omega^\ul\nu(\ul\CG)$. For a compact open subgroup $H\subseteq \FG(\BA^{\ul\nu})$ we define a \emph{rational $H$-level structure} $\bar\gamma$ on a global $\FG$-shtuka $\ul\CG$ over a connected scheme $S\in\Nilp_{A_{\ul\nu}}$ as a $\pi_1^\et(S,\bar{s})$-invariant $H$-orbit $\bar\gamma=\gamma H$ in $\Isom^{\otimes}(\omega_{\BO^{\ul\nu}}^\circ,\omega^\ul\nu(\ul{\CG}))$. For a non-connected scheme $S$ we make a similar definition choosing a base point on each connected component and a rational $H$-level structure on the restriction to each connected component separately. We denote by $\nabla_n^H\scrH^1(C,\FG)^{\ul\nu}$ the category fibered in groupoids over $\Nilp_{A_\ul\nu}$ whose $S$-valued points $\nabla_n^H\scrH^1(C,\FG)^{\ul\nu}(S)$ is the category whose objects are tuples $(\ul\CG,\gamma H)$, consisting of a global $\FG$-shtuka $\ul\CG$ in $\nabla_n\scrH^1(C,\FG)^{\ul\nu}(S)$ together with a rational $H$-level structure $\gamma H$, and whose morphisms are quasi-isogenies of global $\FG$-shtukas that are isomorphisms at the characteristic places $\nu_i$ and are compatible with the $H$-level structures.

		\end{definition}

		\begin{definition}\label{DefNablaHdata}
			\begin{enumerate}
				\item
				Fix an $n$-tuple $\ul \nu:=(\nu_i)$ of closed points of $C$.
				A \emph{$\nabla\scrH^{\ul \nu}$-data} is a tuple $(\FG, (\hat{Z}_{\nu_i})_i, H)$ consisting of
				\begin{enumerate}
					\item
					a parahoric group scheme $\FG$ over $C$,
					\item
					an $n$-tuple $(\hat{Z}_{\nu_i})_i$ of bounds with reflex rings $R_{\hat{Z}_{\nu_i}}$,
					\item
					a compact open subgroup $H\subseteq \FG(\BA_Q^{\ul \nu})\cong Aut^\otimes (\omega_{\BA^{\ul \nu}}^\circ)$.
				\end{enumerate}
				We use the shorthand $\nabla\scrH$-data when $\ul\nu$ is clear from the context.  
				A morphism between two $\nabla\scrH$-data $(\FG, (\hat{Z}_{\nu_i}), H)$ and $(\FG', (\hat{Z}_{\nu_i}'), H')$ is a morphism $\phi: \FG\to \FG'$ such that the morphism $\hat{Z}_{\nu_i}\to\wh{\SpaceFl}_{\BP_{\nu_i}'}$, obtained by the inclusion $\hat{Z}_{\nu_i}\to \wh{\SpaceFl}_{\BP_{\nu_i}}$ followed by the induced morphism $\wh{\SpaceFl}_{\BP_{\nu_i}}\to\wh{\SpaceFl}_{\BP_{\nu_i}'}$, factors through $\hat{Z}_{\nu_i}'$, for every $i$, and $\phi(H)\subseteq H'$.
				
				\item \label{DefNablaHdataRatLevelStr}
				For a compact open subgroup $H\subseteq \FG(\BA_Q^{\ul \nu})$ we define a \emph{rational $H$-level structure} $\bar\gamma$ on a global $\FG$-shtuka $\ul \CG$ over $S\in\Nilp_{A_{\ul\nu}}$ as a $\pi_1(S,\bar{s})$-invariant $H$-orbit $\bar\gamma=\gamma H$ in $\Isom^{\otimes}(\omega^\circ,\omega^\ul\nu(\ul{\CG}))$.
				\item 
				We denote by $\nabla_n^H\scrH^1(C,\FG)^{\ul \nu}$ the category fibered in groupoids whose $S$-valued points $\nabla_n^H\scrH^1(C,\FG)^{\ul \nu}(S)$ is the category whose objects are tuples $(\ul \CG,\bar\gamma)$, consisting of a global $\FG$-shtuka $\ul\CG$ in $\nabla_n \scrH^1(C,\FG)^{\ul\nu}(S)$ together with a rational $H$-level structure $\bar\gamma$, and whose morphisms are quasi-isogenies of global $\FG$-shtukas (see Definition~\ref{DefGlobal_Sht}\ref{DefGlobal_Sht_Mor}) that are isomorphisms at the characteristic places $\nu_i$ and are compatible with the $H$-level structures.  
				

				\item
				Fix an $n$-tuple $\ul\nu=(\nu_i)$ of places on the curve $C$ with $\nu_i\ne\nu_j$ for $i\ne j$ and let $\nabla\scrH^1(C,\FG)^{{\ul\nu}}$ denote the formal completion of the stack $\nabla\scrH^1(C,\FG)$ at $\ul\nu$. Let  $\hat{Z}_{\ul\nu}:=(\hat{Z}_{\nu_i})_i$ be a tuple of closed ind-subschemes $\hat{Z}_i$ of $\wh{\SpaceFl}_{\BP_{\nu_i}}$ which are bounds in the sense of Definition~\ref{DefBDLocal}.
				Let $\ul\CG$ be a global $\FG$-shtuka in $\nabla_n\scrH^1(C,\FG)^{{\ul\nu}}(S)$. 
				We say that \emph{$\ul\CG$ is bounded by $\hat{Z}_{\ul\nu}:=(\hat{Z}_{\nu_i})_i$} if for every $i$ the associated local $\BP_{\nu_i}-$shtuka $\omega_{\nu_i}(\ul\CG)$ is bounded by $\hat{Z}_{\nu_i}$.

			\end{enumerate}
		\end{definition}

		To a $\nabla\scrH^{\ul \nu}$-data $(\FG,(\hat{Z}_{\nu_i})_i, H)$ we associate a moduli stack $\nabla_n^{H,\hat{Z}_{\ul\nu}}\scrH^1(C,\FG)^{{\ul\nu}}$ parametrizing $\FG$-shtukas bounded by $\hat{Z}_{\ul\nu}$ at place $\nu$ which are additionally equipped with a level $H$-structure. Furthermore this correspondence is functorial, i.e. a morphism $\phi:(\FG, (\hat{Z}_i), H)\to(\FG', (\hat{Z}_i'), H')$ between $\nabla\scrH$-data induces the following morphism 
		\begin{equation}\label{EqSHtDataAssignment}
			\phi_\ast: \nabla_n^{H,\hat{Z}_{\ul\nu}}\scrH^1(C,\FG)^{{\ul\nu}}\to\nabla_n^{H',\hat{Z}_{\ul\nu}'}\scrH^1(C,\FG')^{{\ul\nu}}
		\end{equation}

		\begin{definition}\label{Def_AdmG-sht}
			Fix a $\nabla\scrH$-data $(\FG,(\hat{Z}_{\nu_i})_i)$. We say that the global $\FG$-shtuka $\ul\CG:=(\CG,\tau)$ in $\nabla_n^{H,\hat{Z}_{\ul\nu}}\scrH^1(C,\FG)^{{\ul\nu}}$ is admissible if the associated local $\BP_{\nu_i}$-shtukas $\omega_{\nu_i}(\ul\CG)$, 
			at all characteristic places $\nu_i$ for $i=1,\cdots ,n$, are admissible.
			
		\end{definition}

		\begin{proposition}\label{PropHE}
			Let $B(P_\nu)$ be the set of $\hat{\sigma}$-conjugacy classes of elements in $P(\breve{Q}_\nu)$. Let $B(\wt{W}_\nu)$ be the set of $\hat{\sigma}$-conjugacy classes in the Iwahori-Weyl group $\wt W_\nu:=\wt W(P_\nu,S_\nu)$. Let $\Psi_\nu:  B(\wt W_\nu)\to B(P_\nu)$ denote the natural morphism induced from the natural inclusion $N(K_\nu)\to G(K_\nu)$.
			A global $\FG$-shtuka $\ul\CG \in \nabla_n^{\ul w}\scrH^1(C,\FG)^{\ul \nu}(\BaseOfD)$ is admissible if the $\hat{\sigma}$-conjugacy class $[b_i]:=[\omega_{\nu_i}(\tau_\ul\CG)]\in B(P_{\nu_i})$ lies in $\bigcup_{w'\preceq w_i}\Psi(w')$, for all  $1\leq i\leq n$. Here $\nabla_n^{\ul w}\scrH^1(C,\FG)^{\ul \nu}$ denotes the stack  $\nabla_n^{\hat{Z}_{\ul \nu}}\scrH^1(C,\FG)^{\ul \nu}$ with $\hat{Z}_{\ul \nu}:=(\CS(w_i)\wh \times_{\BF_{\omega_i}}\Spf \BF_{\omega_i}\dbl \zeta \dbr)_i$.
		\end{proposition}

		\begin{proof}
			This follows from \cite[Theorem 2.1]{He} and Lemma \ref{LemPSHtLocTrivForEtTop}. 
		\end{proof}
		
		\begin{definition}
			
			We define the following functorial assignment
			$$
			\nabla_n^{*,-}\scrH^1(C,-)^{{\ul\nu}}:(\FG, (\hat{Z}_{\nu_i})_i)\mapsto \nabla_n^{*,\hat{Z}_{\ul\nu}}\scrH^1(C,\FG)^{{\ul\nu}}:=\invlim[H]\nabla_n^{H,\hat{Z}_{\ul\nu}}\scrH^1(C,\FG)^{{\ul\nu}},
			$$ 
			where $H$ runs over all compact open subgroups of $\FG(\BA_Q^{\ul \nu})\cong Aut^\otimes (\omega_{\BA^{\ul \nu}}^\circ)$.
			
		\end{definition}

		\forget{
			--------------------
			\begin{definition}\label{DefFunctor_L} Keep the above notation. Define the functor $\CL$ (resp. $\CL_{spe}$, resp. $\CL_{adm}$) which sends a $\nabla\scrH-$data $(\FG, (\hat{Z}_{\nu_i})_i, H)$ (resp. $(\FG, (\hat{Z}_{\nu_i})_i)$) to the disjoint union $\bigsqcup_{\ul\CG} \scrS(\ul\CG)$ of the quotient stack 
				
				\begin{equation}\label{EqSetS}
					\scrS(\ul\CG):=I(Q)(\ul \CG) \big{\backslash}\prod_{\nu_i} \Breve{X}_{Z_{\nu_i}}(\omega_{\nu_i}(\ul \CG))\times X^{\ul \nu}(\ul\CG) 
					{\slash} H,
				\end{equation}
				
				\noindent
				where $\ul \CG$ runs over the quasi-isogeny classes (resp. quasi-isogeny classes of special, resp. quasi-isogeny classes of admissible) $\FG$-shtukas and $X^{\ul \nu}(\ul\CG)=\Isom^{\otimes}(\omega^\circ,\omega^\ul\nu(\ul{\CG}))$.
				We define the functor $\CL^*$ (resp. $\CL_{spe}^*$, resp. $\CL_{adm}^*$) as the functor which sends $(\cG, (\hat{Z}_i)_i)$ to $\invlim[H]\CL(\cG, (\hat{Z}_{\nu_i})_i, H)$ (resp. $\invlim[H]\CL_{spe}(\cG, (\hat{Z}_{\nu_i})_i, H)$, resp. $\invlim[H]\CL_{adm}(\cG, (\hat{Z}_{\nu_i})_i, H)$).\comment{switch to the definition bellow}
				
			\end{definition}
			----------------
		}
		
		\begin{definition}\label{DefFunctor_L} Keep the above notation. We define the functor $\CL$ as the functor that sends a $\nabla\scrH-$data $(\FG, (\hat{Z}_{\nu_i})_i, H)$ to the disjoint union $\bigsqcup_{\phi} \scrS(\phi)$ of the quotient stack 
			
			\begin{equation}
				\scrS(\phi):=\Aut(\phi) \big{\backslash}\prod_{\nu_i} \Breve{X}_{Z_{\nu_i}}(\omega_{\nu_i}(\phi))\times X^{\ul \nu} (\phi)
				{\slash} H,
			\end{equation}
			
			\noindent
			where $\phi$ runs over isomorphism classes $\phi$ in $\FG\text{-}\breve{\CM ot} (\BF_q^{\alg})$ and $X^{\ul \nu}(\phi)=\Isom^{\otimes}(\omega^\circ,\omega^\ul\nu(\phi))$.
			
			We use the notation $\CL_{adm}$ (resp. $\CL_{spe}$) when we require that $\phi$ in the index set is in addition geometric and admissible (resp. special). We recall that $\phi$ in $\FG-\breve{\CM ot} (\BF_q^{\alg})$ is called geometric if the associated $\FG$-motive $\CM_\phi$ lies in the essential image of the functor \ref{Eq_FunctShtToGMot}. It is called admissible (resp. special) if the associated $\FG$-motive $\CM_\phi$ is admissible (resp. special), see Definitions \ref{Def_AdmG-sht} and \ref{DefSpecialG-shtuka}.   
			
			We define the functor $\CL^*$ (resp. $\CL_{spe}^*$, resp. $\CL_{adm}^*$) as the functor which sends $(\FG, (\hat{Z}_{\nu_i})_i)$ to $\invlim[H]\CL(\FG, (\hat{Z}_{\nu_i})_i, H)$ (resp. $\invlim[H]\CL_{spe}(\FG, (\hat{Z}_{\nu_i})_i, H)$, resp. $\invlim[H]\CL_{adm}(\FG, (\hat{Z}_{\nu_i})_i, H)$).
			
		\end{definition}

		\begin{remark}\label{RemGeometricisGroupTheoretic}
			Note that there is a group theoretic criterion for a $\FG$-motive to be geometric. Namely for a place $\nu$ in $C$ a $\FG$-motive $\wt\CM(-)$ gives rise to a $\BP_\nu$-$\hat{\sigma}$-crystal, and equivalently to a  local $\BP_\nu$-shtuka $\ul\BL_\nu=(L^+\BP,b_\nu\hat{\sigma}^\ast)$, which gives an element in $\hat{\sigma}_\nu$- conjugacy class $B(P_\nu)$. Assume that it is basic and trivial for almost all places $\nu$ in  $C$ and satisfies $\sum_{\nu} inv(b_\nu)=0$. Then there is a global $\ul\CG$ such that $\wh{\Gamma}_\nu(\ul\CG)=\ul\BL_\nu$ by \cite[chapitre 4]{NgoDac}. Notice that we have an isomorphism of functors $\CM(\ul\CG)\to\wt\CM$, by \cite[Prop. 5.2]{CMot}.
			
		\end{remark}

		\subsubsection{The Main Theorem}

		Now we can state the main theorem:
		
		\begin{theorem} \label{ThmL-RForG-ShtTXT}
			
			There exist a canonical $\FG(\BA^{\ul\nu})\times Z(Q)$-equivariant isomorphism of functors
			
			\begin{equation*}
				\CL^*(-)\tilde{\to} \nabla_n^{*,-}\scrH^1(C,-)^{\ul\nu}(\ol \BF).
			\end{equation*}

			Moreover there is an operation $\ul\Phi_m$ on the left hand side of the above isomorphism that corresponds to the Frobenius endomorphism $\sigma^m$ on the right hand side. Furthermore:\\
			
			\begin{enumerate}
				\item 
				The functor $\CL^\ast(-)$ can be replaced by $\CL_{adm}^\ast(-)$. Note that for tamely ramified $\FG$ there exists a group theoretic description for $\CL_{adm}^\ast(-)$, moreover, \forget{due to Xuhua He when $X_{\underline{v}} ({\cal{G}}') \not= \emptyset$}
				
				\item
				When $\FG$ is quasi-split, one may further replace $\CL^\ast(-)$ by $\CL_{spe}^\ast(-)$.
				
			\end{enumerate}

		\end{theorem}

		\begin{proof} The proof consists of the following parts. In the first part we understand the points of a newton stratum of $\nabla_n^{H,\hat{Z}_{\ul\nu}}\scrH^1(C,\FG)^{{\ul\nu}}$ using the uniformization theory. Below we recall this part from \cite{AH_Global}. We in particular check that the isomorphism is compatible with the $\FG(\BA^\ul\nu)\times Z(Q)\times \ul\Phi_m$ action; see Proposition \ref{PropThetaisEquivariantUnderGZPhi}. In the second part one needs to understand the index set, see definition \ref{DefFunctor_L}. To achieve this we need to describe the index set in terms of the representations of the motivic fundamental groupoid, which we discussed in Section \ref{SectG-ShtAndGMotives} and determine the admissible and special ones in the group theoretic sense; See subsection \ref{SubsectAdmandSpe}. Let us first recall the following ingredients.

			\begin{remark}
				
				Let $\genericG$ denote the generic fiber of $\BP$ and let $b\in L\genericG(\BaseFldOfLocSht)$ for some field $\BaseFldOfLocSht\in\Nilp_{\BaseOfD\dbl\zeta\dbr}$. 
				With $b$ Kottwitz associates a slope homomorphism
				$$
				\nu_b\colon  D_{\BaseFldOfLocSht\dpl z\dpr}\to \genericG_{\BaseFldOfLocSht\dpl z\dpr},
				$$
				called Newton polygon of $b$; see \cite[4.2]{Ko1}. Here $D$ is the diagonalizable pro-algebraic group over ${\BaseFldOfLocSht\dpl z\dpr}$ with character group $\BQ$. The slope homomorphism is characterized by assigning the slope filtration of $(V\otimes_{\BaseOfD\dpl z\dpr}{\BaseFldOfLocSht\dpl z\dpr},\rho(b)\cdot(\id\otimes \hat{\sigma}))$ to any $\BaseOfD\dpl z\dpr$-rational representation $(V,\rho)$ of $\genericG$; see \cite[Section 3]{Ko1}.  
				We assume that $b\in L\genericG(\BaseFldOfLocSht)$ satisfies a \emph{decency equation for a positive integer $s$}, that is,
				\begin{equation}\label{EqDecency}
					(b\hat{\sigma})^s\;=\;s\nu_b(z)\,\hat{\sigma}^s\qquad\text{in}\quad L\genericG(\BaseFldOfLocSht)\ltimes \langle\hat{\sigma}\rangle.
				\end{equation}
				
			\end{remark}
			\begin{remark}\label{RemDecent}
				Assume that $b\in L\genericG(\BaseFldOfLocSht)$ is decent with the integer $s$ and let $\ell\subset\BaseFldOfLocSht^\alg$ be the finite field extension of $\BaseOfD$ of degree $s$. Then $b\in L\genericG(\ell)$ because by \eqref{EqDecency} the element $b$ has values in the fixed field of $\hat\sigma^s$ which is $\ell$. Note that if $\BaseFldOfLocSht$ is algebraically closed, any $\hat{\sigma}$-conjugacy class in $L\genericG(\BaseFldOfLocSht)$ contains an element satisfying a decency equation; see \cite[Section 4]{Ko1}. 
			\end{remark}
			
			\begin{remark}\label{RemJb}
				With the element $b\in L\genericG(\BaseFldOfLocSht)$ one can associate a connected algebraic group $J_b$ over $\BaseOfD\dpl z\dpr$ which is defined by its functor of points that assigns to an $\BaseOfD\dpl z\dpr$-algebra $R$ the group
				$$
				J_b(R):=\big\{g \in \genericG(R\otimes_{\BaseOfD\dpl z\dpr} {\BaseFldOfLocSht\dpl z\dpr})\colon g^{-1}b\hat{\sigma}(g)=b\big\}.
				$$
				Let $b$ satisfy a decency equation for the integer $s$ and let $F_s$ be the fixed field of $\hat{\sigma}^s$ in ${\BaseFldOfLocSht\dpl z\dpr}$. Then $\nu_b$ is defined over $F_s$ and $J_b\times_{\BaseOfD\dpl z\dpr}F_s$ is the centralizer of the 1-parameter subgroup $s\nu_b$ of $\genericG$ and hence a Levi subgroup of $\genericG_{F_s}$; see \cite[Corollary 1.9]{RZ}. In particular $J_b(\BaseOfD\dpl z\dpr)\subset \genericG(F_s)\subset L\genericG(\ell)$ where $\ell$ is the finite field extension of $\BaseOfD$ of degree $s$.
			\end{remark}

			\begin{point}\textbf{Rapoport-Zink Spaces for local $\BP$-shtukas}\label{PointRZSpece} Let $\hat{Z}$ be a bound with reflex ring $R_{\hat{Z}}=\kappa\dbl\xi\dbr$ and special fiber $Z\subset\SpaceFl_\BP\whtimes_\BaseOfD\Spec\kappa$ ; see Definition~\ref{DefBDLocal}. Let $\ul\BL_0=(L^+\BP,b\hat{\sigma}^*)$ be a trivialized local $\BP$-shtuka over a field $\BaseFldOfLocSht$ in $\Nilp_{\BaseOfD\dbl\zeta\dbr}$. Assume that $b$ is decent with integer $s$ and let $\ell\subset\BaseFldOfLocSht^\alg$ be the compositum of the residue field $\kappa$ of $R_{\hat{Z}}$ and the finite field extension of $\BaseOfD$ of degree $s$. Then $b\in L\genericG(\ell)$ by Remark~\ref{RemDecent}. So $\ul\BL_0$ is defined over $\ell$ and we may replace $\BaseFldOfLocSht$ by $\ell$. Note that $\ell\dbl\xi\dbr$ is the unramified extension of $R_{\hat{Z}}$ with residue field $\ell$.
				
				set $\ul\CL_0:=\ul\BL_0$ and consider the following sub-functor of \ref{eqPAFlag}

				\begin{eqnarray}\label{EqR-ZFunct}
					\ul{\Breve{\CM}}_{\ul{\BL}_0}^{\hat{Z}}\colon (\Nilp_{\BaseFldOfLocSht\dbl\xi\dbr})^o &\longto & \Sets\\\nonumber
					S&\longmapsto & \Bigl\{\,\text{Isomorphism classes of }(\ul\CL,\bar\ppsi) \text{where}\\\nonumber
					&&~~~~~~~\ul\CL \text{~is a local} ~\BP\text{-shtuka} \text{~bounded by $\hat{Z}$} \text{and} \\\nonumber
					&&~~~~~~~~~~~~~~~~~~\ol\delta:\CL_{\ol S}\to \ul\BL_{0, \ol S} \text{~is a quasi-isogeny}\Bigr\}
				\end{eqnarray}
				
				Recall that in \cite[Theorem 4.18]{AH_Local} we proved the following
				\begin{theorem}
					
					The above functor $\ul{\Breve{\CM}}_{\ul{\BL}_0}^{\hat{Z}}\colon (\Nilp_{\BaseFldOfLocSht\dbl\xi\dbr})^o\to\Sets$ is pro-representable by a formal scheme over $\Spf \BaseFldOfLocSht\dbl\xi\dbr$, which is called a \emph{bounded Rapoport-Zink space for local $\BP$-shtukas}. It is locally formally of finite type. Its underlying reduced subscheme equals $\Breve{X}_{Z}(b)$. In particular $\Breve{X}_Z(b)$ is a scheme locally of finite type over $\BaseFldOfLocSht$.

				\end{theorem}
				
				
			\end{point}
			
			\begin{point}\textbf{Uniformization Theory for the moduli of global $\FG$-shtukas} In this paragraph we recall the uniformization theory of the moduli stacks of global $\FG$-shtukas from \cite{AH_Global}.
				\begin{lemma}\label{LemLocalIsogeny}
					Let $\ul\CG\in\nabla_n\scrH^1(C,\CG)^{\ul\nu}(\SSS)$ be a global $\FG$-shtuka over $\SSS$. Let $\ul\CL_{\nu_i}(\ul\CG)$ be the local $\BP_{\nu_i}$-shtuka associated with $\ul\CG$ via the crystalline realization functor $\omega_{\nu_i}(-)$ at the characteristic place $\nu_i$. Let $\tilde f\colon\ul{\tilde\CL}{}'_{\nu_i}\to \ul\CL_{\nu_i}(\ul\CG)$ be a quasi-isogeny of local $\BP_{\nu_i}$-shtukas over $\SSS$. Then there exists a unique global $\FG$-shtuka $\ul\CG'\in\nabla_n\scrH^1(C,\FG)^{\ul\nu}(\SSS)$ and a unique quasi-isogeny $g=g_{\tilde{f}}\colon\ul\CG'\to\ul\CG$ which is the identity outside $\nu_i$, such that the local $\BP_{\nu_i}$-shtuka associated with $\ul\CG'$ is $\ul{\tilde\CL}{}'_\nu$ and the quasi-isogeny of local $\BP_\nu$-shtukas induced by $g$ is $\tilde f$. We denote $\ul\CG'$ by $\tilde f^*\ul\CG$.
				\end{lemma}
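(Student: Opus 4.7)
The plan is to construct $\ul\scrG'$ and $g$ by exploiting the cartesian local--global description of $\scrH^1(C,\CG)$ from Remark~\ref{RemPrelimGTors}, which identifies a $\CG$-bundle on $C_S$ with the datum of its restriction $\dot\scrG$ to $(C\setminus\{\nu_i\})_S$, its completion $\L^+_{\nu_i}(\scrG)$ at $\nu_i$ (an $L^+\wt\BP_{\nu_i}$-torsor via Weil restriction), and a canonical gluing isomorphism $\phi\colon\L_{\nu_i}(\dot\scrG)\isoto\L(\L^+_{\nu_i}(\scrG))$ of their associated $L\wt\genericG_{\nu_i}$-torsors. First I would translate $\tilde f$ into a quasi-isogeny of local $\wt\BP_{\nu_i}$-shtukas: since every $S\in\Nilp_{\wh A_{\ul\nu}}$ is an $\BF_{\nu_i}$-scheme (because $\wh A_{\ul\nu}\cong\BF_{\ul\nu}\dbl\zeta_1,\ldots,\zeta_n\dbr$), the equivalence at the end of Remark~\ref{RemBH}, based on the Frobenius identifications~\eqref{EqIsomBH} between the components $\Var(\Fa_{\nu_i,\ell})$, converts $\tilde f$ into a quasi-isogeny $\tilde f^W\colon\ul{\wt\CL}{}'\to\L^+_{\nu_i}(\ul\scrG)$ of local $\wt\BP_{\nu_i}$-shtukas whose Frobenius is invertible off $\Var(\Fa_{\nu_i,0})$.

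Next I would define $\scrG'$ as the unique $\CG$-bundle on $C_S$ whose cartesian datum is $\bigl(\dot\scrG,\,\L^+_{\nu_i}(\scrG'),\,\phi'\bigr)$, where $\L^+_{\nu_i}(\scrG')$ is the $L^+\wt\BP_{\nu_i}$-torsor underlying $\ul{\wt\CL}{}'$ and $\phi'$ is the composition of $\phi$ with the isomorphism of $L\wt\genericG_{\nu_i}$-torsors induced by $\tilde f^W$; recall that by Definition~\ref{DefLocalPSht}\ref{DefLocalPShtII} a quasi-isogeny of local $\BP$-shtukas is precisely an isomorphism of the associated loop group torsors. The cartesian diagram of Remark~\ref{RemPrelimGTors} then produces $\scrG'$ together with a tautological quasi-isogeny $g\colon\scrG'\to\scrG$ which is the identity on $\dot C_S$ and which induces $\tilde f^W$ (hence $\tilde f$) at $\nu_i$. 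Equipping $\scrG'$ with the pulled-back Frobenius $\tauGlob':=g^{-1}\circ\tauGlob\circ\s g$ gives an isomorphism on $C_S\setminus\bigcup_j\Gamma_{\charsect_j}$, since $g$ is an isomorphism there; the intertwining relation $\tilde\tau{}'\circ\hat\sigma^{\ast}\tilde f=\tilde f\circ\tilde\tau$ built into the definition of a quasi-isogeny of local shtukas guarantees that this $\tauGlob'$ is precisely the Frobenius prescribed by $\ul{\wt\CL}{}'$ on the completion at $\nu_i$, so that $\omega_{\nu_i}(\ul\scrG')=\ul{\tilde\CL}{}'_{\nu_i}$ and $\omega_{\nu_i}(g)=\tilde f$ as required. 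Uniqueness of the pair $(\ul\scrG',g)$ is then immediate from the cartesian property: any competitor produces the same triple $(\dot\scrG,\L^+_{\nu_i}(\scrG'),\phi')$, and its Frobenius is forced by $\tauGlob$ and $g$ through the relation $\tauGlob'=g^{-1}\circ\tauGlob\circ\s g$.

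The main obstacle I expect is not conceptual but bookkeeping: one must carefully verify that passing from the $\BP_{\nu_i}$-shtuka $\tilde f$ to the $\wt\BP_{\nu_i}$-shtuka $\tilde f^W$ via~\eqref{EqIsomBH2}, then to a modification of the formal $\hat{\wt\BP}_{\nu_i}$-torsor, and finally back to a $\CG$-torsor on $C_S$ via the cartesian diagram of Remark~\ref{RemPrelimGTors}, respects all Frobenius structures in a compatible manner, and that the resulting $\tauGlob'$ has its singularities concentrated exactly along the graphs $\Gamma_{\charsect_j}$ with no extra poles elsewhere. Once this compatibility of Frobenius-twisted gluings is checked, the lemma follows from the universal property of the cartesian square.
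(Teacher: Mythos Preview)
Your approach is correct and is essentially the argument behind the result the paper invokes. The paper itself does not give a proof here: it simply records that this is a particular case of \cite[Proposition~5.7]{AH_Local}. What you have written is a faithful sketch of how that proposition is proved, namely by using the Beauville--Laszlo type cartesian square of Remark~\ref{RemPrelimGTors} to glue the unchanged restriction $\dot\scrG$ to the new $L^+\wt\BP_{\nu_i}$-torsor coming from $\ul{\tilde\CL}{}'_{\nu_i}$ via the equivalence of Remark~\ref{RemBH}, and then transporting the Frobenius through the quasi-isogeny.

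One small point of presentation: your formula $\tauGlob':=g^{-1}\circ\tauGlob\circ\s g$ is literally only defined on $(C\setminus\{\nu_i\})_S\setminus\bigcup_j\Gamma_{\charsect_j}$, since $g$ is merely a quasi-isogeny at $\nu_i$. The honest construction of $\tauGlob'$ is the gluing you describe next: on $(C\setminus\{\nu_i\})_S$ it is $\tauGlob$ under the identification $\dot\scrG'=\dot\scrG$, and on the formal neighborhood of $\nu_i$ it is the Frobenius of $\ul{\tilde\CL}{}'_{\nu_i}$ transported through~\eqref{EqIsomBH2}; the intertwining relation for $\tilde f$ is exactly what makes these two agree on the overlap (at the level of $L\wt\genericG_{\nu_i}$-torsors) so that the cartesian square produces a global $\tauGlob'$ defined on all of $C_S\setminus\bigcup_j\Gamma_{\charsect_j}$. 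You clearly understand this, but phrasing it as a gluing rather than as a global conjugation formula would make the argument airtight.
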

				
				\begin{proof}
					It is a particular case of \cite[Proposition~5.7]{AH_Local}.
				\end{proof}
				
				Let $H\subset \FG (\BA_Q^{\ul\nu})$ be a compact open subgroup. Let $\ul{\CG}_0$ be a global $\FG$-shtuka over an algebraically closed field $\BaseFldInSectUnif$ with characteristic $\ul \nu$, bounded by $\hat{Z}_{\ul\nu}$. We let $I_{\ul\CG_0}(Q)$ denote the group $\QIsog_\BaseFldInSectUnif(\ul\CG_0)$ of quasi-isogenies of $\ul\CG_0$; see Definition~\ref{DefGlobal_Sht}\ref{DefGlobal_Sht_Mor}. Let $(\ul{\BL}_i)_{i=1\ldots n}:=\omega_\ul\nu(\ul\CG_0)$ denote the associated tuple of local $\BP_{\nu_i}$-shtukas over $\BaseFldInSectUnif$, where $\omega_{\ul\nu}$ is the crystalline realization functor from proposition~\ref{PropEquivOfMotCats} b). Let $\Breve{\CM}_{\ul \BL_i}^{\hat{Z}_i}$ denote the Rapoport-Zink space of $\ul\BL_i$. Since $\BaseFldInSectUnif$ is algebraically closed we may assume that all $\ul\BL_i$ are trivialized and decent. The product $\prod_i \Breve{\CM}_{\ul{\BL}_i}^{\hat{Z}_i}:=\Breve{\CM}_{\ul{\BL}_1}^{\hat{Z}_1}\whtimes_\BaseFldInSectUnif\ldots\whtimes_\BaseFldInSectUnif\Breve{\CM}_{\ul{\BL}_n}^{\hat{Z}_n}$ is a formal scheme locally formally of finite type over $\Spec\BaseFldInSectUnif\whtimes_{\BF_{\ul\nu}}\Spf A_{\ul\nu}=\Spf\BaseFldInSectUnif\dbl\zeta_1,\ldots,\zeta_n\dbr=:\Spf\BaseFldInSectUnif\dbl\ul\zeta\dbr$. 
				Using the above proposition \ref{LemLocalIsogeny}, one may construct the following morphism 
				\begin{equation}\label{EqUnifMorphismI}
					\Theta_{\ul{\cG}_0}'\colon  \prod_i \Breve{\CM}_{\ul\BL_i}^{\hat{Z}_{\nu_i}}\times X^\ul\nu(\ul\cG_0) {\slash}H \longto \nabla_n^{H,\hat{Z}_{\ul\nu}}\scrH^1(C,\FG)^{{\ul\nu}}\whtimes_{\BF_{\ul\nu}}\Spec\BaseFldInSectUnif
				\end{equation}
				
				which is defined by sending $(\ul\CL_{\nu_i},\delta_i)_i \times \alpha H$ to 
				$(\delta_n^\ast\dots\delta_1^\ast\ul\cG_0, \omega^\ul\nu(g_{\delta_n})^{-1}\circ\dots \circ \omega^\ul\nu(g_{\delta_1})^{-1}\circ \alpha)$. Here $X^{\ul \nu}(\ul\CG_0)=\Isom^{\otimes}(\omega^\circ,\omega^\ul\nu(\ul{\CG}_0))$. For the sake of completeness we recall the following important observation from \cite{AH_Global}.

				\begin{remark}\label{RemQuotientByI}
					Recall that a formal algebraic Deligne-Mumford stack $\CX$ over $\Spf\breve R_{\ulHZ}$ (see \cite[Definition~A.5]{Har1}) is \emph{$\CJ$-adic} for a sheaf of ideals $\CJ\subset\CO_\CX$, if for some (any) presentation $X\to\CX$ the formal scheme $X$ is $\CJ\CO_X$-adic, that is, $\CJ^r\CO_X$ is an ideal of definition of $X$ for all $r$. We then call $\CJ$ an \emph{ideal of definition of $\CX$}. We say that $\CX$ is \emph{locally formally of finite type} if $\CX$ is locally noetherian, adic, and if the closed substack defined by the largest ideal of definition (see \cite[A.7]{Har1}) is an algebraic stack locally of finite type over $\Spec\BaseFldInSectUnif$.
					The quotient of $\prod_i \Breve\CM_{\ul\BL_i}^{\hat{Z}_i}\times X^{\ul \nu}(\ul\CG_0)/H$ by the abstract group $I_{\ul\CG_0}\!(Q)$ exists as a locally noetherian, adic formal algebraic Deligne-Mumford stack locally formally of finite type over $\Spf\breve R_{\ulHZ}$; see \cite[Proposition 7.7]{AH_Global}. \forget{ and is of the form
						\begin{equation}\label{EqSourceOfTheta}
							I_{\ul\CG_0}\!(Q) \big{\backslash}\bigl(\prod_i \Breve\CM_{\ul\BL_i}^{\hat{Z}_i}\times X^{\ul \nu}(\ul\CG_0)/H\bigr) \enspace\cong\enspace \coprod_{\bar\gamma} \Gamma_{\bar\gamma}\big{\backslash}\prod_i \Breve\CM_{\ul\BL_i}^{\hat{Z}_i}\,.
						\end{equation}
						Here $\bar\gamma:=\gamma H\in X^{\ul \nu}(\ul\CG_0)/H$ runs through a set of representatives for the countable double coset $$I_{\ul\CG_0}\!(Q) \backslash X^{\ul \nu}(\ul\CG_0)/H\cong \epsilon\bigl(I_{\ul\CG_0}\!(Q)\bigr) \backslash \FG(\BA^{\ul\nu})/H,$$ and 
						$$
						\Gamma_{\bar\gamma}\;:=\; I_{\ul\CG_0}\!(Q)\cap \bigl(\prod_i J_{\ul{\BL}_i}(Q_{\nu_i})\times \gamma H \gamma^{-1}\bigr)\;\subset\; \prod_i J_{\ul{\BL}_i}(Q_{\nu_i})
						$$
						is a subgroup, which is discrete for the product of the $\nu_i$-adic topologies, and separated in the profinite topology, that is, for every $1\ne g \in \Gamma_{\bar\gamma}$ there is a normal subgroup of finite index in $\Gamma_{\bar\gamma}$ that does not contain $g$. In particular the closed substack of \eqref{EqSourceOfTheta} defined by the largest ideal of definition is the Deligne-Mumford stack locally of finite type over $\Spec\BaseFldInSectUnif$ given by
						\begin{equation}\label{EqReducedSourceOfTheta}
							I_{\ul\CG_0}\!(Q) \big{\backslash}\bigl(\prod_i \Breve{X}_{Z_i}(\ul\BL_i)\times X^{\ul \nu}(\ul\CG_0)/H\bigr) \enspace\cong\enspace \coprod_{\bar\gamma} \Gamma_{\bar\gamma}\big{\backslash}\prod_i \Breve{X}_{Z_i}(\ul\BL_i)\,.
						\end{equation}

					}
					
				\end{remark}

				In the following proposition we recall the various actions on the source and the target of the above map \ref{EqUnifMorphismI}, from \cite{AH_Global}  and we will further observe  that this is equivariant under these actions.

				\begin{proposition}\label{PropThetaisEquivariantUnderGZPhi}
					There are the actions of 
					\begin{enumerate}
						\item the quasi-isogeny group $I(Q)$, on the source, and
						\item the group $\prod_iZ(Q_{\nu_i})$, where $Z\subset\FG\times_C\Spec Q$ is the center, on the source and target
						\item the Frobenius $\ul\Phi_m$ (see the proof below for the construction), and finally
						\item Hecke, after passing to the limit over all level $H$-structures, through the Hecke correspondence, on the source and target of the morphism \ref{EqUnifMorphismI}. 
					\end{enumerate}
					Moreover, the actions a), b) and d) commute with each other. 
				\end{proposition}

				\begin{proof}
					
					a) \textbf{the action of the quasi-isogeny group $I(Q)$} The group $\QIsog_{\BaseFldOfLocSht}(\ul\BL)$ of quasi-isogenies of $\ul\BL$ acts on $\RZ$ via $j\colon(\ul\CL,\bar\delta)\mapsto(\ul\CL,j\circ\bar\delta)$. Since $\ul\BL=\bigl((L^+\BP)_\BaseFldOfLocSht,b\hat{\sigma}^*\bigr)$ is trivialized and decent, $\QIsog_{\BaseFldOfLocSht}(\ul\BL)=J_b(\BaseOfD\dpl z\dpr)$ where $J_b$ is the connected algebraic group over $\BaseOfD\dpl z\dpr$ which is defined by its functor of points that assigns to an $\BaseOfD\dpl z\dpr$-algebra $R$ the group
					\begin{equation}\label{EqGroupJ}
						J_b(R):=\bigl\{\,j \in \genericG(R\otimes_{\BaseOfD\dpl z\dpr} {\BaseFldOfLocSht\dpl z\dpr})\colon j^{-1}b\hat{\sigma}(j)=b\,\bigr\}\,.
					\end{equation}
					In particular, $\QIsog_{\BaseFldOfLocSht}(\ul\BL)=J_b(\BaseOfD\dpl z\dpr)\subset L\genericG(\ell)$ and this group acts on $\Breve{X}_Z(b)$ via $j\colon g\mapsto j\cdot g$ for $j\in J_b(\BaseOfD\dpl z\dpr)$. The natural action of the quasi-isogeny group $J_{\ul{\BL}_i}(Q_{\nu_i})=\QIsog_\BaseFldInSectUnif(\ul{\BL}_i)$ over $\BaseFldInSectUnif$ $\Breve{\CM}_{\ul{\BL}_i}^{\hat{Z}_i}$; induces a natuaral action  of the group $I_{\ul\CG_0}(Q)$ on $\prod_i \Breve{\CM}_{\ul{\BL}_i}^{\hat{Z}_i}$ via the natural morphism 
					\begin{equation}\label{EqI(Q)}
						I_{\ul\CG_0}(Q)\longto \prod_i J_{\ul{\BL}_i}(Q_{\nu_i}),\es \alpha\mapsto \bigl(\omega_{\nu_i}(\alpha)\bigr)_i=:(\alpha_i)_i.
					\end{equation}

					The group $I_{\ul\CG_0}\!(Q)$ also acts naturally on $\omega^\ul\nu(\ul{\CG}_0)$ and  $X^\ul\nu(\ul\cG_0)$ by sending $\gamma\in X^\ul\nu(\ul\cG_0)$ to $\omega^\ul\nu(\eta)\circ\gamma$ for $\eta\in I_{\ul\CG_0}\!(Q)$. After choosing an element $\gamma_0\in\Isom^{\otimes}(\omega_{\BO^{\ul\nu}}^\circ,\omega_{\BO^{\ul\nu}}^\ul\nu(\ul{\CG}_0))$, this defines a morphism 
					\begin{equation}\label{EqEpsilon}
						\epsilon\colon  I_{\ul\CG_0}\!(Q) \;\longto\; Aut^\otimes(\omega^\circ)\;\cong\;\FG(\BA^{\ul\nu}),\quad\eta\mapsto\gamma_0\circ\omega^\ul\nu(\eta)\circ\gamma_0^{-1}.
					\end{equation}

					\bigskip 
					
					\noindent
					b) \textbf{the action of the group $Z(Q)$} First observe that for any local $\BP_{\nu_i}$-shtuka $\ul\CL_i$ over a scheme $S\in\Nilp_{A_{\nu_i}}$, the element $c_i$ in $Z(Q_{\nu_i})$ induces an element $c_i\in\QIsog_S(\ul\CL_i)$ of the quasi-isogeny group of $\ul\CL_i$. Namely, over an \'etale covering $S'\to S$ we can choose a trivialization $\alpha\colon\ul\CL_{i,S'}\isoto\bigl((L^+\BP_{\nu_i})_{S'},b'_i\hat\sigma^*_{\nu_i}\bigr)$ and then $c_i=\hat\sigma_{\nu_i}(c_i)$ implies $c_i\,b'_i=b'_i\,\hat\sigma_{\nu_i}(c_i)$, that is $\alpha^{-1}\circ c_i\circ\alpha\in\QIsog_{S'}(\ul\CL_i)$. To show that this quasi-isogeny descends to $S$ let $pr_1,pr_2\colon S'':=S'\times_S S' \to S'$ be the projections onto the first and second factor and set $g:=pr_2^*\alpha\circ pr_1^*\alpha^{-1}\in L^+\BP_{\nu_i}(S'')$. Then $g \circ pr_1^*c_i\circ g^{-1}=pr_1^*c_i = pr_2^*c_i$ implies
					\[
					pr_1^*(\alpha^{-1}\circ c_i\circ\alpha)\;=\;pr_2^*\alpha^{-1}\circ g \circ pr_1^*c_i\circ g^{-1}\circ pr_2^*\alpha\;=\;pr_2^*(\alpha^{-1}\circ c_i\circ\alpha)\,,
					\]
					and this shows that $\alpha^{-1}\circ c_i\circ\alpha$ descends to a quasi-isogeny of $\ul\CL_i$ over $S$ which we denote by $c_i\in\QIsog_S(\ul\CL_i)$. If moreover we are given a quasi-isogeny $\delta_i\colon\ul\CL_i\to\ul\BL_{i,S}$, that is if $(\ul\CL_i,\delta_i)\in\CM_{\ul\BL_i}(S)$, then $\delta_i$ is automatically compatible with the quasi-isogenies $c_i\in\QIsog_S(\ul\CL_i)$ and $c_i\in\QIsog_\BaseFldInSectUnif(\ul\BL_i)$, that is $\delta_i\circ c_i=c_i\circ\delta_i$. Indeed, using the trivialization $\alpha$ over $S'$ from above, $\delta_i$ corresponds to $g_i:=\delta_i\circ\alpha^{-1}\in L\genericG_{\nu_i}(S')$, and then
					\[
					\delta_i\circ c_i\;:=\;\delta_i\circ(\alpha^{-1}\circ c_i\circ\alpha)\;=\;g_i\circ c_i\circ\alpha\;=\;c_i\circ g_i\circ\alpha\;=\;c_i\circ\delta_i\,.
					\]
					This shows that the action of $c_i\in Z(Q_{\nu_i})$ on $\breve\CM_{\ul{\BL}_i}^{\hat{Z}_i}$ is given as
					\begin{equation}\label{EqActionCenter3}
						c_i\colon\breve\CM_{\ul{\BL}_i}^{\hat{Z}_i}\;\longto\;\breve\CM_{\ul{\BL}_i}^{\hat{Z}_i}\,,\quad (\ul\CL_i,\delta_i)\;\longmapsto\; (\ul\CL_i,c_i\circ\delta_i)\;=\;(\ul\CL_i,\delta_i\circ c_i)\,. 
					\end{equation}
					This action commutes with the action of $\eta\in I_{\ul\CG_0}\!(Q)$, because $\eta$ and $(c_i)_i$ act on $\prod_i \breve\CM_{\ul\BL_i}^{\hat{Z}_i}\times X^\ul\nu(\ul\CG_0))/H$ by 
					\[
					(\ul{\CL}_i,\delta_i)_i \times \gamma H\;\longmapsto\;(\ul{\CL}_i,\omega_{\nu_i}(\eta)\circ\delta_i\circ c_i)_i\times \omega^\ul\nu(\eta)\gamma H\,.
					\]
					
					On the other hand $(c_i)_i\in\prod_i Z(Q_{\nu_i})$ also acts on $\nabla_n^{H,\ulHZ}\scrH^1(C,\FG)^{\ul\nu}$ as follows. Let $(\ul\CG,\gamma H)$ be in $\nabla_n^{H,\ulHZ}\scrH^1(C,\FG)^{\ul\nu}(S)$ and consider the associated local $\BP_{\nu_i}$-shtukas $\ul\CL_i:=\omega_{\nu_i}(\ul\CG)$. We have seen above that $c_i$ induces an element $c_i\in\QIsog_S(\ul\CL_i)$ of the quasi-isogeny group of $\ul\CL_i$. By Lemma~\ref{LemLocalIsogeny} there is a uniquely determined global $\FG$-shtuka $c_n^*\circ\ldots\circ c_1^*\,\ul\CG$ and a quasi-isogeny $c\colon c_n^*\circ\ldots\circ c_1^*\,\ul\CG\to\ul\CG$ which is an isomorphism outside the $\nu_i$ and satisfies $\omega_{\nu_i}(c)=c_i$. We can now define the action of $(c_i)_i\in\prod_i Z(Q_{\nu_i})$ on $(\ul\CG,\gamma H)\in\nabla_n^{H,\ulHZ}\scrH^1(C,\FG)^{\ul\nu}(S)$ as
					\begin{equation}\label{EqActionCenter2}
						(c_i)_i\colon\;(\ul\CG,\gamma H)\;\longmapsto\;(c_n^*\circ\ldots\circ c_1^*\,\ul\CG\,,\,\omega^\ul\nu(c)^{-1}\gamma H)\,.
					\end{equation}
					
					\bigskip
					\noindent
					c) \textbf{The Frobenius action $\ul\Phi_m$} First we observe that the source and target of $\Theta$ carry a \emph{Weil-descent datum} for the ring extension $R_{\ulHZ}\subset\breve R_{\ulHZ}=k\dbl\xi_1,\ldots,\xi_n\dbr$, compare~\cite[Definition~3.45]{RZ}.  Recall that for a stack $\CH$ over $\Spf\breve R_{\ulHZ}$, we consider the stack $\CH^\lambda$ defined by 
					\[
					\CH^\lambda(S)\;:=\;\CH(S_{[\lambda]}),\,
					\]
					for a scheme $(S,\theta)\in\Nilp_{\breve R_{\ulHZ}}$ where $\theta\colon S\to\Spf\breve R_{\ulHZ}$ denotes the structure morphism of the scheme $S$. Here $\lambda$ denote the $R_{\ulHZ}$-automorphism of $\breve R_{\ulHZ}$ given by
					\[
					\lambda\colon \xi_i\;\longmapsto\;\xi_i \quad\text{and}\quad \lambda|_k\;=\;\Frob_{\#\kappa,\BaseFldInSectUnif}\colon x\;\longmapsto\;x^{\#\kappa}\quad\text{for }x\in k\,
					\]
					and $S_{[\lambda]}\in\Nilp_{\breve R_{\ulHZ}}$ denote the pair ($S,\lambda\circ\theta)$.
					Then a \emph{Weil-descent datum} on $\CH$ is an isomorphism of stacks $\CH\isoto\CH^\lambda$, that is an equivalence $\CH(S)\isoto\CH(S_{[\lambda]})$ for every $S\in\Nilp_{\breve R_{\ulHZ}}$, compatible with morphisms in $\Nilp_{\breve R_{\ulHZ}}$.

					On $\nabla_n^{H,\ulHZ}\scrH^1(C,\FG)^{\ul\nu}\whtimes_{R_{\ulHZ}} \Spf\breve R_{\ulHZ}$ the canonical Weil-descent datum is given by the identity
					\begin{equation}\label{EqDescentDatumOnNablaH}
						\id\colon\nabla_n^{H,\ulHZ}\scrH^1(C,\FG)^{\ul\nu}(S)\;\isoto\;\nabla_n^{H,\ulHZ}\scrH^1(C,\FG)^{\ul\nu}(S_{[\lambda]})\,,\quad(\ul\CG,\gamma H)\;\longmapsto\;(\ul\CG,\gamma H)
					\end{equation}
					because under the inclusion $\Nilp_{\breve R_{\ulHZ}}\into\Nilp_{R_{\ulHZ}}$ we have $S=S_{[\lambda]}$ in $\Nilp_{R_{\ulHZ}}$.

					Consider the Weil descent datum given by
					\begin{align}\label{EqDescentDatumSource}
						\prod_i\Breve{\CM}_{\ul\BL_i}^{\hat{Z}_i}(S)\;\isoto\; & \prod_i\Breve{\CM}_{\ul\BL_i}^{\hat{Z}_i}(S_{[\lambda]}),\\
						(\ul\CL_i,\delta_i\colon\ul\CL_i\to\ul\BL_{i,S})_i\;\longmapsto\; & (\ul\CL_i,\theta^*(\tau_{\BL_i}^{-[\kappa\colon\BF_q]})\circ\delta_i\colon\ul\CL_i\to\ul\BL_{i,S_{[\lambda]}})_i\,.\nonumber
					\end{align}
					Here we observe that $\BL_{i,S}:=\theta^*\ul\BL_i$ and $\ul\BL_{i,S_{[\lambda]}}:=(\lambda\circ\theta)^*\ul\BL_i=\theta^*\lambda^*\ul\BL_i=\theta^*\sigma^{[\kappa\colon\BF_q]*}\ul\BL_i$, and that $\tau_{\BL_i}^{-[\kappa\colon\BF_q]}\colon\ul\BL_i\to\sigma^{[\kappa\colon\BF_q]*}\ul\BL_i$ is a quasi-isogeny.

					Let $m$ is a multiple of $[\kappa\colon\BF_q]$, we define the $q^m$-Frobenius morphism  
					\begin{align}\label{EqFrobOnSource}
						\ul\Phi_m\colon\;\bigl(\prod_i\breve\CM_{\ul\BL_i}^{\hat{Z}_i}\whtimes_{R_i}\Spec\BaseFldInSectUnif\bigr) & \times X^\ul\nu(\ul\CG_0)/H\;\longto\; \\
						&  \bigl(\prod_i\Breve{\CM}_{\ul\BL_i}^{\hat{Z}_i}\whtimes_{R_i}\Spec\BaseFldInSectUnif\bigr)\times X^\ul\nu(\ul\CG_0)/H\,.\nonumber
					\end{align}
					in the following way.

					Let $(\ul\CL_i,\delta)_i$ be a point in $\prod_i\Breve{\CM}_{\ul\BL_i}^{\hat{Z}_i}(S)$. Set $\lambda=\lambda_m=\Frob_{m,k}$. Note that since $m$ is a multiple of $[\kappa_i:\BF_q]$ and $Z_i$ is defined over $\kappa_i$, the local $\BP$-shtuka $\sigma^{m\ast}$ is bounded by $\hat{Z}_i$ and thus $(\sigma^{m\ast}(\ul\CL_i,\delta_i)_i$ lies in $\left(\prod_i\Breve{\CM}_{\ul\BL_i}^{\hat{Z}_i}\right)^{\lambda_m}(S)$, which maps to  $(\sigma^{m*}\ul\CL_i, \theta^*(\tau_{\BL_i}^m)\circ\sigma^{m*}\delta_i)_i$ in $\left(\prod_i\Breve{\CM}_{\ul\BL_i}^{\hat{Z}_i}\right)(S)$ via \ref{EqDescentDatumSource}. This defines the $q^m$-Frobenius endomorphism

					\begin{equation}\label{EqFrobOnRZ}
						\ul\Phi_m\colon\; \bigl(\prod_i\Breve{\CM}_{\ul\BL_i}^{\hat{Z}_i}\whtimes_{R_i}\Spec\BaseFldInSectUnif\bigl) \times X^\ul\nu(\ul\CG_0)/H\to \bigl(\prod_i\Breve{\CM}_{\ul\BL_i}^{\hat{Z}_i}\whtimes_{R_i}\Spec\BaseFldInSectUnif\bigr)\times X^\ul\nu(\ul\CG_0)/H\nonumber
					\end{equation}
					which is $(\ul\CL_i,\delta_i)\mapsto (\sigma^{m\ast}\ul\CL_i, \tau_{\BL_i}^m\circ\sigma^{m\ast}\delta_i)$ on the first factor and $\id$ on $X^\ul\nu(\ul\CG_0)\slash H$.

					On $\prod_i \Breve{\CM}_{\ul\BL_i}^{\hat{Z}_i}\times X^\ul\nu(\ul\CG_0)/H$ we consider the product of the Weil Descent data \eqref{EqDescentDatumSource} with the identity on $X^\ul\nu(\ul\CG_0)/H$. This Weil descent datum commutes with the action of $\eta\in I_{\ul\CG_0}\!(Q)$. Note that $\theta^*(\tau_{\BL_i}^{-[\kappa\colon\BF_q]}\circ\omega_{\nu_i}(\eta))=\theta^*(\sigma^{[\kappa\colon\BF_q]*}(\omega_{\nu_i}(\eta))\circ\tau_{\BL_i}^{-[\kappa\colon\BF_q]})=(\lambda\theta)^*(\omega_{\nu_i}(\eta))\circ\theta^*(\tau_{\BL_i}^{-[\kappa\colon\BF_q]})$. This defines a Weil descent datum on $\CY:=I_{\ul\CG_0}\!(Q) \big{\backslash}\bigl(\prod_i \Breve{\CM}_{\ul\BL_i}^{\hat{Z}_i}\times \Isom^{\otimes}(\omega^\circ,\check{\CV}_{\ul{\CG}_0})/H\bigr)$ by
					
					\begin{align}\label{EqDescentDatumSourceModI}
						\CY(S) \;\isoto\; & \CY^\lambda(S)\;=\;\CY(S_{[\lambda]}) \\
						(\ul\CL_i,\delta_i)_i\times \gamma H\;\longmapsto\; & (\ul\CL_i,\theta^*(\tau_{\BL_i}^{-[\kappa\colon\BF_q]})\circ\delta_i)_i\times \gamma H\,. \nonumber
					\end{align}

					It follows that $\ul\Phi_m$ commutes with the action of $\eta\in I_{\ul\CG_0}\!(Q)$, because the composition is given by 
					\[
					(\ul{\CL}_i,\delta_i)_i \times \gamma H\;\longmapsto\;(\ul{\CL}_i,\omega_{\nu_i}(\eta)\circ\delta_i\circ\tau_{\CL_i}^m)_i \times \omega^\ul\nu(\eta)\gamma H\,.
					\]

					\bigskip

					\noindent
					d) \textbf{Hecke correspondences and Hecke operation}
					The action of $h\in\FG(\BA^{\ul\nu})$ by Hecke correspondences is explicitly given as follows. Let $H,H'\subset\FG(\BA^{\ul\nu})$ be compact open subgroups. Then the Hecke correspondences $\pi(h)_{H'\!,H}$ are given by the diagrams
					\begin{equation}\label{EqHeckeSource}
						\xymatrix @C=-4pc {
							& \prod_i \Breve\CM_{\ul\BL_i}^{\hat{Z}_i}\times X^{\ul \nu}(\ul\CG_0)/(hHh^{-1}\cap H') \ar[dl] \ar[dr] \\
							\prod_i \Breve\CM_{\ul\BL_i}^{\hat{Z}_i}\times X^{\ul \nu}(\ul\CG_0)/H& & \prod_i \Breve\CM_{\ul\BL_i}^{\hat{Z}_i}\times X^{\ul \nu}(\ul\CG_0)/H'\ar@{-->}[ll]
						}
					\end{equation}
					where the left (resp. right) hand side morphism in the above roof sends $(\ul{\CL}_i,\delta_i)_i \times \gamma(hHh^{-1}\cap H')$ to $(\ul{\CL}_i,\delta_i)_i \times \gamma hH$ (resp. $(\ul{\CL}_i,\delta_i)_i \times \gamma H'$),
					and
					\begin{equation}\label{EqHeckeTarget}
						\xymatrix @C=0pc {
							& \nabla_n^{(hHh^{-1}\cap H'),\ulHZ}\scrH^1(C,\FG)^{\ul\nu} \ar[dl] \ar[dr] \\
							\nabla_n^{H,\ulHZ}\scrH^1(C,\FG)^{\ul\nu} & & \nabla_n^{H'\!,\ulHZ}\scrH^1(C,\FG)^{\ul\nu} \ar@{-->}[ll],
						}
					\end{equation}
					where the left (resp. right) hand side morphism in the above roof sends $(\ul\CG,\gamma(hHh^{-1}\cap H'))$ to $(\ul\CG,\gamma hH)$ (resp. $(\ul\CG,\gamma H')$). From the definition of the $H$-level structure and the morphism $\Theta$ it is clear that the map $\Theta$ is equivariant under the Hecke action

					A special case for $H'\subset H$ and $h=1$ are the forgetful morphisms 
					\[
					\pi(1)_{H'\!,H}\colon\prod_i \Breve\CM_{\ul\BL_i}^{\hat{Z}_i}\times X^\ul\nu(\ul\cG_0)/H'\to\prod_i \Breve\CM_{\ul\BL_i}^{\hat{Z}_i}\times X^\ul\nu(\ul\cG_0)/H
					\]
					and $\pi(1)_{H'\!,H}\colon\nabla_n^{H'\!,\ulHZ}\scrH^1(C,\FG)^{\ul\nu}\to\nabla_n^{H,\ulHZ}\scrH^1(C,\FG)^{\ul\nu}$, which are finite \'etale and surjective; see \cite[Theorem 6.7 b)]{AH_Global}.

					\forget{
						
						--------------------------
						Notice that the following operator
						
						$$
						\Phi_b.g=\CN b . \hat{\sigma}^m g 
						$$
						\noindent

						acts on $\Breve{X}_Z(b)$. Here $\CN b:=b.\hat{\sigma}b\dots\hat{\sigma}^{m-1}b$ and $m=[R_{\hat{Z}}:\BF_q\dbl\zeta\dbr]$.

						Notice that the above stacks $\scrS(\ul\CG)$ are equipped with the following actions
						
						$\ul\Phi=(\Phi_i)$ operates on $\CL(\FG, (\hat{Z}_i)_i, H)$ via the action of $\Phi_i$ on $\Breve{X}_{Z_{\nu_i}}(\omega_{\nu_i}(\ul \CG))$. Here $\Phi_i:=\Phi_{b_i}$, where $b_i:=\omega_{\nu_i}(\tau_\ul\CG)$; see Remark \ref{Phi}.
						
						$\FG(\BA_Q^{\ul\nu})$ operates via the Hecke correspondence. More explicitly let $H,H'\subset\FG(\BA_Q^{\ul\nu})$ be compact open subgroups. Then the Hecke correspondences $\pi(g)_{H,H'}$ are given by the diagrams
						\begin{equation}\label{EqHeckeSource}
							\xymatrix @C=0pc {
								& \prod_i \Breve{X}_{Z_{\nu_i}}(\omega_{\nu_i}(\ul \CG))\times X^{\ul \nu} /(H'\cap g^{-1}Hg) \ar[dl] \ar[dr] \\
								\prod_i \Breve{X}_{Z_{\nu_i}}(\omega_{\nu_i}(\ul \CG))\times X^{\ul \nu} {\slash} H'& & \prod_i \Breve{X}_{Z_{\nu_i}}(\omega_{\nu_i}(\ul \CG))\times X^{\ul \nu} {\slash} H\ar@{-->}[ll]\\
								& (x_i)_i \times h(H'\cap g^{-1}Hg)\ar@{|->}[dl] \ar@{|->}[dr]\\
								(x_i)_i \times hH' & & (x_i)_i \times hg^{-1}H
							}
						\end{equation}
						
						This defines a true operation of $\FG(\BA_Q^{\ul\nu})$ on $\invlim[H] \CL(\cG, (\hat{Z}_i)_i, H)$.
						---------------------------
					}

				\end{proof}

			\end{point}

			We return to the proof of theorem \ref{ThmL-RForG-ShtTXT}. Since the above actions, Proposition \ref{PropThetaisEquivariantUnderGZPhi}, commute with the action of the quasi-isogeny group $I(Q)$, the morphism \ref{EqUnifMorphismI} induces the following morphism
			
			$$
			\Theta_{\ul{\cG}_0}\colon  I_{\ul{\cG}_0}(Q) \big{\backslash}\prod_i \Breve\CM_{\ul\BL_i}^{\hat{Z}_{\nu_i}}\times X^\ul\nu(\ul\cG_0) {\slash}H \longto \nabla_n^{H,\hat{Z}_{\ul\nu}}\scrH^1(C,\FG)^{{\ul\nu}}\whtimes_{\BF_{\ul\nu}}\Spec\BaseFldInSectUnif
			$$
			of ind-algebraic stacks over $\Spf\BaseFldInSectUnif\dbl\ul\zeta\dbr$ which is ind-proper and formally \'etale, and moreover is equivariant under the actions mentioned in Proposition \ref{PropThetaisEquivariantUnderGZPhi}. Now let $\{T_\iii\}$ be a set of representatives of $I(Q)$-orbits of the irreducible components of $\prod_i \Breve\CM_{\ul{\BL}_i}^{\hat{Z}_i}\times \FG(\BA_Q^{\ul\nu}) {\slash}H$. Then the image $\Theta_{\ul\CG_0}'(T_\iii)$ of $T_\iii$ under $\Theta_{\ul\CG_0}'$ is closed and each $\Theta_{\ul\CG_0}'(T_\iii)$ intersects only finitely many others. Let $\CZ$ denote the union of the $\Theta_{\ul\CG_0}'(T_\iii)$ and let $\nabla_n^{H,\hat{Z}_{\ul\nu}},\scrH^1(C,\FG)_{/\CZ}^{{\ul\nu}}$ be the formal completion of $\nabla_n^{H,\hat{Z}_{\ul\nu}},\scrH^1(C,\FG)^{{\ul\nu}}\whtimes_{\BF_{\ul\nu}}\Spec\BaseFldInSectUnif$ along $\CZ$. According to \cite[Theorem 7.11]{AH_Global}, $\Theta_{\ul\CG_0}$ induces an isomorphism of formal algebraic stacks over $\Spf\BaseFldInSectUnif\dbl\ul\zeta\dbr$
			$$
			\Theta_{{\ul\CG_0}_{/_\CZ}}\colon  I_{\ul\CG_0}(Q) \big{\backslash}\prod_i \Breve\CM_{\ul\BL_i}^{\hat{Z}_{\nu_i}}\times X^\ul\nu(\ul\CG_0) {\slash}H \;\isoto\; \nabla_n^{H,\hat{Z}_{\ul\nu}}\scrH^1(C,\FG)_{/\CZ}^{{\ul\nu}}.
			$$

			By Proposition \ref{PropThetaisEquivariantUnderGZPhi} we observe that $\Theta_{{\ul\CG_0}_{/_\CZ}}$ is equivariant under the action of  $\FG(\BA^{\ul\nu})\times Z(Q)$. Moreover $\ul\Phi_m$ defines the $q^m$-Frobenius endomorphism $\ul\Phi_m$ of $I_{\ul\CG_0}\!(Q) \big{\backslash}\bigl(\prod_i \Breve{\CM}_{\ul\BL_i}^{\hat{Z}_i}\times X^\ul\nu(\ul\CG_0)/H\bigr)$.

			We must also check that the action $\ul\Phi_m$ on the left corresponds to the $m$-Frobenius $\sigma^m$ on the right, i.e. 
			
			\begin{lemma}
				The following diagram 
				
				$$
				\xymatrix @C+2pc {
					I_{\ul\CG_0}(Q) \big{\backslash}\prod_i \Breve\CM_{\ul\BL_i}^{\hat{Z}_{\nu_i}}\times X^\ul\nu(\ul\CG_0) {\slash}H \ar[r]^{\Theta_{{\ul\CG_0}_{/_\CZ}}} \ar[d]_{\ul\Phi_m} & \nabla_n^{H,\hat{Z}_{\ul\nu}}\scrH^1(C,\FG)_{/\CZ}^{{\ul\nu}} \ar[d]_{\sigma^m} \\
					I_{\ul\CG_0}(Q) \big{\backslash}\prod_i \Breve\CM_{\ul\BL_i}^{\hat{Z}_{\nu_i}}\times X^\ul\nu(\ul\CG_0) {\slash}H \ar[r]^{\Theta_{{\ul\CG_0}_{/_\CZ}}} & \nabla_n^{H,\hat{Z}_{\ul\nu}}\scrH^1(C,\FG)_{/\CZ}^{{\ul\nu}}.
				}
				$$
				
				commutes.
			\end{lemma}
			
			\begin{proof}
				We recall the statement from \cite{AH_Global}. First notice that, as we will see below,$\nabla_n^{H,\ulHZ}\scrH^1(C,\FG)^{\ul\nu}_{/\CZ}$ carries the $q^m$-Frobenius endomorphism $\sigma^m$ induced from 
				
				\begin{align}\label{EqFrobOnTarget}
					\nabla_n^{H,\ulHZ}\scrH^1(C,\FG)^{\ul\nu}\whtimes_{R_{\ulHZ}} \Spec\BaseFldInSectUnif\;\longto\; & \nabla_n^{H,\ulHZ}\scrH^1(C,\FG)^{\ul\nu}\whtimes_{R_{\ulHZ}} \Spec\BaseFldInSectUnif \nonumber \\
					(\ul\CG,\gamma H) \;\longmapsto\; & (\sigma^{m*}\ul\CG,\sigma^{m*}(\gamma)H)\,.
				\end{align}
				Since $m$ is a multiple of $[\kappa_i\colon\BF_q]$ and the reduced subscheme $Z_i$ of the bound $\hat{Z}_i$ is defined over $\kappa_i$ the Frobenius $\omega_{\nu_i}(\tau_{\sigma^{m*}\CG})=\sigma^{m*}\omega_{\nu_i}(\tau_{\CG})$ lies in $\sigma^{m*}Z_i=Z_i$. This means that also $\sigma^{m*}\ul\CG$ is bounded by $\ulHZ$.

				Suppose $\Theta_{\ul\CG_0}'(\ul\Phi_m)=\sigma^m \Theta_{\ul\CG_0}'$. Since $\ul\Phi_m$ commutes with the action of $I_{\ul\CG_0}\!(Q)$, see Proposition \ref{PropThetaisEquivariantUnderGZPhi}, this also proves that $\Theta_{\ul\CG_0}(\ul\Phi_m)=\sigma^m\Theta_{\ul\CG_0}$. Notice that if a morphism $S_\red\to\nabla_n^{H,\ulHZ}\scrH^1(C,\FG)^{\ul\nu}\whtimes_{R_{\ulHZ}} \Spf\breve R_{\ulHZ}$ given by $(\ul\CG,\gamma H)$ factors through $\CZ=\im(\Theta_{\ul\CG_0}')$, then also the morphism $S_\red\to\nabla_n^{H,\ulHZ}\scrH^1(C,\FG)^{\ul\nu}\whtimes_{R_{\ulHZ}} \Spf\breve R_{\ulHZ}$ given by $(\sigma^{m*}\ul\CG,\sigma^{m*}(\gamma)H)$ factors through $\CZ$. This shows that also $\Theta_{{\ul\CG_0}_{/\CZ}}$ is compatible with the $q^m$-Frobenius endomorphisms. So it remains to check that the following diagram 
				
				$$
				\xymatrix @C+2pc {
					\prod_i \Breve\CM_{\ul\BL_i}^{\hat{Z}_{\nu_i}}\times X^\ul\nu(\ul\CG_0) {\slash}H \ar[r]^{\Theta_{{\ul\CG_0}}'} \ar[d]_{\ul\Phi_m} & \nabla_n^{H,\hat{Z}_{\ul\nu}}\scrH^1(C,\FG)^{{\ul\nu}} \ar[d]_{\sigma^m} \\
					\prod_i \Breve\CM_{\ul\BL_i}^{\hat{Z}_{\nu_i}}\times X^\ul\nu(\ul\CG_0) {\slash}H \ar[r]^{\Theta_{{\ul\CG_0}}'} & \nabla_n^{H,\hat{Z}_{\ul\nu}}\scrH^1(C,\FG)^{{\ul\nu}}
				}
				$$
				commutes.

				Let $\ul y:=(\ul\CL_i,\delta_i)_i\times \gamma H$ in $\bigl(\prod_i\breve\CM_{\ul\BL_i}^{\hat{Z}_i}\whtimes_{R_i}\Spec\BaseFldInSectUnif\bigr) \times X^\ul\nu(\ul\CG_0)/H$ be an $S$-valued point. The image of this point in $\nabla_n^{H,\ulHZ}\scrH^1(C,\FG)^{\ul\nu}$ is given by 
				$$
				\Theta_{\ul\CG_0}'(\ul y)=(\ul\CG,\omega^\ul\nu(\delta)^{-1}\gamma H)
				$$
				where $\ul\CG:=\delta_n^*\circ\ldots\circ\delta_1^*\,\ul\CG_{0,S}$ and $\delta\colon\ul\CG\to\ul\CG_{0,S}$ is the quasi-isogeny with $\omega_{\nu_i}(\delta)=\delta_i$, which is isomorphism outside $\ul\nu$.
				The image of $\ul\Phi_m(\ul y)=(\sigma^{m*}\ul\CL_i, \tau_{\BL_i}^m\circ\sigma^{m*}\delta_i)_i\times\gamma H$ in $\nabla_n^{H,\ulHZ}\scrH^1(C,\FG)^{\ul\nu}$ is given by  
				$$
				\Theta_{\ul\CG_0}'\ul\Phi_m(\ul y)=(\ul\CG',\omega^\ul\nu(\delta')^{-1}\gamma H),
				$$ 
				where $\ul\CG':=(\tau_{\BL_n}^m\sigma^{m*}\delta_n)^*\circ\ldots\circ(\tau_{\BL_1}^m\sigma^{m*}\delta_1)^*\,\ul\CG_{0,S}$, and $\delta'\colon\ul\CG'\to\ul\CG_{0,S}$ is the quasi-isogeny with $\omega_{\nu_i}(\delta')=\tau_{\BL_i}^m\circ\sigma^{m*}\delta_i$, which is isomorphism outside $\ul\nu$.\\ 
				
				We obtain for the image $\ul\Phi_m\circ\Theta_{\ul\CG_0}'(\ul y)=(\sigma^{m*}\ul\CG,\sigma^{m*}(\omega^\ul\nu(\delta)^{-1}\gamma) H)$, which comes with the quasi-isogeny $\sigma^{m*}(\delta)\colon\sigma^{m*}\ul\CG\to\sigma^{m*}\ul\CG_{0,S}$. Then $\phi:=\sigma^{m*}(\delta)^{-1}\circ\tau_{\CG_0}^{-m}\circ\delta'\colon\ul\CG'\to\sigma^{m*}\ul\CG$ is a quasi-isogeny with $\omega_{\nu_i}(\phi)=\id_{\sigma^{m*}\ul\CL_i}$ and $\omega^\ul\nu(\phi)\circ\omega^\ul\nu(\delta')^{-1}\gamma H=\omega^\ul\nu(\sigma^{m*}\delta)^{-1}\circ\omega^\ul\nu(\tau_{\CG_0}^m)^{-1}\circ\gamma H=\sigma^{m*}(\omega^\ul\nu(\delta)^{-1}\gamma) H$. This is because $\omega^\ul\nu(\tau_{\CG_0}^{m})^{-1}\circ\gamma=\sigma^{m*}(\gamma)$. 
				The Lemma now follows.
			\end{proof}

			\noindent
			The following lemma shows that the map $\Theta_{\ul\CG_0}$ covers the Newton stratum.

			\begin{lemma}\label{LemmaThetaCoversNewtonStratum}
				Let $\ul\cG_0$, $\ul\cG_0'$ be $\FG$-shtukas in $\nabla_n\scrH^1(C,\FG)^{\ul\nu}(\BF)$. Then the following are equivalent
				
				\begin{enumerate}
					\item\label{LemmaThetaCoversNewtonStratum(a)}
					$\im \Theta_{\ul \cG_0}\cap\im \Theta_{\ul \cG_0'}\neq \emptyset $
					\item\label{LemmaThetaCoversNewtonStratum(b)}
					$\im \Theta_{\ul \cG_0}=\im \Theta_{\ul \cG_0'}$
					\item\label{LemmaThetaCoversNewtonStratum(c)}
					There is a quasi-isogeny $\ul \cG_0 \to \ul \cG_0'$ over $\BF$.
				\end{enumerate}
			\end{lemma}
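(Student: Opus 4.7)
The plan is to establish the cycle $(b) \Rightarrow (a) \Rightarrow (c) \Rightarrow (b)$. The decisive observation, which I would record first, is a characterization of the image of $\Theta_{\ul\scrG_0}$: by construction of $\Theta$ together with iterated application of Lemma~\ref{LemLocalIsogeny}, a pair $(\ul\cG,\bar\gamma)$ lies in $\im \Theta_{\ul\scrG_0}$ if and only if there exists a quasi-isogeny $f\colon \ul\cG \to \ul\scrG_0$ over $\BF$ which is an isomorphism away from the characteristic places $\ul\nu$, such that $\bar\gamma$ is the $H$-orbit of $\omega^{\ul\nu}(f) \circ \alpha$ for some $\alpha \in X^{\ul\nu}(\ul\scrG_0)$. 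Everything else will follow from this characterization.

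Granting this, $(b) \Rightarrow (a)$ is immediate: the image $\im \Theta_{\ul\scrG_0}$ is non-empty since the trivial modification $(\ul\BL_i,\id)_i$ together with the tautological $\alpha$ maps to $(\ul\scrG_0,\bar\alpha) \in \im \Theta_{\ul\scrG_0}$. For $(a) \Rightarrow (c)$: suppose $(\ul\cG,\bar\gamma) \in \im \Theta_{\ul\scrG_0} \cap \im \Theta_{\ul\scrG_0'}$. By the characterization, we obtain quasi-isogenies $f\colon \ul\cG \to \ul\scrG_0$ and $f'\colon \ul\cG \to \ul\scrG_0'$, each an isomorphism away from $\ul\nu$. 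Then $f' \circ f^{-1} \colon \ul\scrG_0 \to \ul\scrG_0'$ is the desired quasi-isogeny over $\BF$.

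The step requiring more care is $(c) \Rightarrow (b)$. Given a quasi-isogeny $\beta\colon \ul\scrG_0 \to \ul\scrG_0'$ over $\BF$, I would use that $\beta$ induces, at each characteristic place $\nu_i$, a quasi-isogeny $\beta_i := \omega_{\nu_i}(\beta) \colon \ul\BL_i \to \ul\BL_i'$ of local $\BP_{\nu_i}$-shtukas, and away from $\ul\nu$ a tensor isomorphism $\omega^{\ul\nu}(\beta) \colon \omega^{\ul\nu}(\ul\scrG_0) \isoto \omega^{\ul\nu}(\ul\scrG_0')$. The first yields isomorphisms of Rapoport-Zink spaces $\CM_{\ul\BL_i}^{\hat{Z}_{\nu_i}} \isoto \CM_{\ul\BL_i'}^{\hat{Z}_{\nu_i}}$ by composition with $\beta_i^{-1}$ in the quasi-isogeny factor, while the second yields a bijection $X^{\ul\nu}(\ul\scrG_0)/H \isoto X^{\ul\nu}(\ul\scrG_0')/H$. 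Conjugation by $\beta$ identifies $I(Q) = \QIsog_\BF(\ul\scrG_0)$ with $\QIsog_\BF(\ul\scrG_0')$, and a direct check, using the explicit formula defining $\Theta$, shows that these identifications intertwine $\Theta_{\ul\scrG_0}$ and $\Theta_{\ul\scrG_0'}$, so that their images in $\nabla_n^{H,\hat{Z}_{\ul\nu}}\scrH^1(C,\cG)^{\ul\nu}\whtimes_{\BF_{\ul\nu}}\Spec\BaseFldInSectUnif$ coincide.

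The principal obstacle is the bookkeeping in $(c) \Rightarrow (b)$, specifically verifying compatibility of the transport along $\beta$ with the level structure and with the $I(Q)$-action after conjugation. This is purely formal given Lemma~\ref{LemLocalIsogeny}, but the notational cost is non-trivial; fortunately, the characterization of $\im\Theta_{\ul\scrG_0}$ established at the outset makes the argument transparent, since it reduces the claim $\im\Theta_{\ul\scrG_0} = \im\Theta_{\ul\scrG_0'}$ to the tautology that post-composing with $\beta$ gives a bijection between quasi-isogenies into $\ul\scrG_0$ and quasi-isogenies into $\ul\scrG_0'$ (in both cases required to be isomorphisms outside $\ul\nu$).
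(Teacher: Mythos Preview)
Your proof is correct and follows essentially the same approach as the paper: both identify the image of $\Theta_{\ul\scrG_0}$ with the quasi-isogeny locus of $\ul\scrG_0$, and for the key implication $(c)\Rightarrow(b)$ both transport along the local quasi-isogenies $\omega_{\nu_i}(\beta)$ and the tensor isomorphism $\check{\CV}_\beta$ induced by $\beta$ to obtain a commuting diagram intertwining $\Theta_{\ul\scrG_0}$ with $\Theta_{\ul\scrG_0'}$. Your cycle $(b)\Rightarrow(a)\Rightarrow(c)\Rightarrow(b)$ in fact straightens out what appears to be a mislabeling of the implications in the paper's own proof.
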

			
			\begin{proof}
				\ref{LemmaThetaCoversNewtonStratum(a)} $\Rightarrow$ \ref{LemmaThetaCoversNewtonStratum(b)} obvious,
				\ref{LemmaThetaCoversNewtonStratum(b)}$\Rightarrow$ \ref{LemmaThetaCoversNewtonStratum(c)} follows from the fact that the image of the morphism $\Theta_{\ul\cG_0}$ lies in the quasi-isogeny locus of $\ul\cG_0$. It remains to show that \ref{LemmaThetaCoversNewtonStratum(c)} implies \ref{LemmaThetaCoversNewtonStratum(a)}.\\ 
				Let $f:\ul\cG_0 \to \ul\cG_0'$ be a quasi-isogeny over $\BF$. The quasi-isogeny $f$ induces an n-tuple of quasi-isogenies $\omega_{\nu_i}(f):\ul{\CL}_{\nu_i} \to \ul{\CL}_{\nu_i}'$ of local $\BP_{\nu_i}$-shtukas and also a tensor isomorphism $\omega^\ul\nu(f):\omega^\ul\nu(\ul \cG_0)\isoto\omega^\ul\nu(\ul \cG_0')$. These data in turn define the vertical arrows which make the following
				
				\[
				\xygraph{
					!{<0cm,0cm>;<1cm,0cm>:<0cm,1cm>::}
					!{(0,0) }*+{\nabla_n^{H,\hat{Z}_{\ul\nu}}\scrH^1(C,\FG)^{{\ul\nu}}\whtimes_{\BF_{\ul\nu}}\Spec\BaseFldInSectUnif}="a"
					!{(-6,0) }*+{\prod_i \Breve{X}_{Z_{\nu_i}}(\ul{\CL}_{\nu_i})\times X^{\ul \nu}(\ul\cG_0) {\slash} H}="c"
					!{(-6,-3) }*+{\prod_i \Breve{X}_{Z_{\nu_i}}(\ul{\CL}_{\nu_i}')\times X^{\ul \nu}(\ul\cG_0'.) {\slash} H}="e"
					"c":^{\Theta_{\ul \cG_0}'}"a"
					"c":@/_1em/"e"
					"e":"c"
					"e":^{\Theta_{\ul \cG_0'}'}"a"
				} 
				\]

				commutative. 
				
			\end{proof}
			
			After passing to the special fiber and taking the disjoint union over quasi-isogeny classes of $\FG$-shtukas, we obtain
			
			$$
			\bigsqcup_\ul\CG I_{\ul\CG}(Q)\backslash \prod_i \Breve{X}_{Z_{\nu_i}}(\ul{\CL}_{\nu_i})\times X^{\ul \nu}(\ul\cG)) {\slash} H\to \nabla_n^{H,\hat{Z}_{\ul\nu}}\scrH^1(C,\FG)^{{\ul\nu}}\whtimes_{\BF_{\ul\nu}}\Spec\BaseFldInSectUnif
			$$

			Note that according to Remark \ref{RemarkRealizationOfGMotives}, Remark \ref{RemarkEquivOFMotCats} and Corollary \ref{CorQIsogEqAut}, the left hand side can be described equivalently  as
			
			\begin{equation}
				\bigsqcup_\phi \Aut(\phi) \big{\backslash}\prod_{\nu_i} \Breve{X}_{Z_{\nu_i}}(\omega_{\nu_i}(\phi))\times X^{\ul \nu} (\phi)
				{\slash} H,
			\end{equation}
			
			\noindent
			where $\phi$ runs over isomorphism classes $\phi$ in $\FG\text{-}\breve{\CM ot} (\BF_q^{\alg})$ and $X^{\ul \nu}(\phi)=\Isom^{\otimes}(\omega^\circ,\omega^\ul\nu(\phi))$, see Definition \ref{ADLV}. Taking limit over all compact open subgroups $H$, we obtain 
			
			\begin{equation*}
				\CL^*(\FG,\hat{Z}_\ul\nu)\tilde{\to} \nabla_n^{*,\hat{Z}_\ul\nu}\scrH^1(C,\FG)^{\ul\nu}(\ol \BF).
			\end{equation*}
			This isomorphism is compatible with the action of $\FG(\BA_Q^{\ul\nu})\times Z(Q)\times\ul\Phi$, see Proposition \ref{PropThetaisEquivariantUnderGZPhi}. 
			Note that, when $$\hat{Z}_{\ul \nu}:=(\CS(w_i)\wh \times_{\BF_{\omega_i}}\Spf \BF_{\omega_i}\dbl \zeta \dbr)_i$$ then there is a group theoretic description for the admissiblity according to \ref{PropHE}. Moreover, by \ref{ThmHamacher-Kim} and Lemma \ref{LemPSHtLocTrivForEtTop} the quasi-isogeny class of a $\FG$-shtuka $\ul\CG$ always contains a special one. 
			
		\end{proof}


		%
		%
		
		{\small
			
		}
		
		\vfill
		
		\begin{minipage}[t]{0.5\linewidth}
			\noindent
			Esmail Arasteh Rad,\\ Institute for research in \\
			fundamental sciences (IPM)\\ P. O. Box 19395-5746 Tehran\\ Iran\\ email: \href{earasteh@ipm.ir}{earasteh@ipm.ir}
			
		\end{minipage}
		\begin{minipage}[t]{0.45\linewidth}
			\noindent
			Urs Hartl\\
			Universit\"at M\"unster\\
			Mathematisches Institut \\
			Einsteinstr.~62\\
			D -- 48149 M\"unster
			\\ Germany
			\\[1mm]
			\href{http://www.math.uni-muenster.de/u/urs.hartl/index.html.en}{www.math.uni-muenster.de/u/urs.hartl/}
		\end{minipage}
		

\begin{thebibliography}{GHKR2}
				\addcontentsline{toc}{section}{References}
				
				
				\bibitem[And]{Anderson} G.W.~Anderson, \emph{t-Motives}, Duke Math. J. 53 (1986), 457–502. 
				
				
				
				
				\bibitem[Ara]{Ara} E.~Arasteh Rad: \emph{Rapoport-Zink Spaces For Local $\mathbb{P}$-Shtukas and Their Local Models}, preprint 2018. available as \href{https://arxiv.org/abs/1807.03301}{arXiv: 1807.03301}.
				
				
				
				\bibitem[AraHar14]{AH_Local} E.~Arasteh Rad, U.~Hartl: \emph{Local $\BP$-shtukas and their relation to global $\FG$-shtukas}, Muenster J.~Math (2014); also available as \href{http://arxiv.org/abs/1302.6143}{arXiv:1302.6143}.
				
				
				
				
				\bibitem[AraHar19]{AH_Global} E.~Arasteh Rad, U.~Hartl: \emph{Uniformizing the moduli stacks of global $\FG$-Shtukas}, to appear in Int.\ Math.\ Res.\ Notices (2019); also available as \href{http://arxiv.org/abs/1302.6351}{arXiv:1302.6351}.
				
				
				\bibitem[AraHar20]{CMot} E.~Arasteh Rad and U. Hartl \emph{Category of $C$-Motives over Finite Fields}, Journal of Number Theory 2020, DOI: https://doi.org/10.1016/j.jnt.2020.06.015, preprint available as \href{https://arxiv.org/abs/1810.11941}{arXiv:1810.11941}.
				
				
				\bibitem[AraHab]{AH_LM} E.~Arasteh Rad and S.~Habibi \emph{Local Models For the Moduli Stacks of Global G-Shtukas},  Mathematical Research Letters, Vol. 26, No. 2 (2019), pp. 323-364, preprint available as \href{http://arxiv.org/abs/1605.01588v3}{arXiv:1605.01588v3}.
				
				
				
				\bibitem[Bh-Sch]{Bh-Sch} B. Bhatt, P. Scholze, \emph{Projectivity of the Witt vector affine Grassmannian}, Invent. Math. 209 (2017),
				no. 2, 329--423.
				
				
				
				
				
				
				\bibitem[BD]{B-D} A.~Beilinson, V.~Drinfeld: \emph{Quantization of Hitchin's integrable system and Hecke eigensheaves}, preprint on \href{http://www.math.uchicago.edu/~mitya/langlands/hitchin/BD-hitchin.pdf}{http:/\!/www.math.uchicago.edu/$\sim$mitya/langlands.html}.
				
				
				\bibitem[BS68]{BS} A. Borel and T. A. Springer. ``Rationality properties of linear algebraic groups. II''.
				In: T\^ohoku Math. J. (2) 20 (1968), pp. 443–497. issn: 0040-8735. url: https://doi.
				org/10.2748/tmj/1178243073.
				
				
				
				\bibitem[Bou58]{BourbakiAlgebra} N.~Bourbaki: \emph{Alg\`ebre, Chapitre 8}, Hermann, Paris 1958.
				
				
				\bibitem[BT72]{B-T} F.~Bruhat, J.~Tits: \emph{Groupes r\'eductifs sur un corps local}, Inst.\ Hautes \'Etudes Sci.\ Publ.\ Math.\ {\bfseries 41} (1972), 5--251.
				
				\bibitem[BT84]{B-TII} F.~Bruhat, J.~Tits: \emph{Groupes r\'eductifs sur un corps local II.~Sch\'emas en groupes, Existence d'une donn\'e radicielle valu\'ee}, Inst.\ Hautes \'Etudes Sci.\ Publ.\ Math.\ {\bfseries 60} (1984), 197--376.
				
				
				\bibitem[Del69]{De1} P.~ Deligne, \emph{Vari\'et\'es ab\'eliennes ordinaires sur un corps fini}, Invent.~ Math 8 , 238-243, (1969)
				
				\bibitem[Del71]{De2} P.~ Deligne, \emph{Travaux de Shimura}, Exp 389, S\'eminaire Bourbaki (1970-1971), Lecture notes
				in Math.~ 244, Springer, pp. 123-165, 1971.
				
				
				\bibitem[Del79]{De3} P.~ Deligne, \emph{Vari\'et\'es de Shimura: interpr\'etation modulaire, et techniques de construction
					de mod\'eles canoniques, Automorphic forms, representations and L-functions (Corvallis
					1977)}, Proc.~ Sympos.~ Pure Math XXXIII, Amer.~ Math.~ Soc, pp.~ 247-289, 1979.
				
				
				\bibitem[Del90]{De4}P.~Deligne, \emph{Cat\'egories tannakiennes}, in The Grothendieck Festschrift, Vol II, Progress in Math.~87, Birkh\"auser, Boston, 1990, 111--195. 
				
				\bibitem[DM82]{De-Mi} P.~Deligne, and J.~S.~Milne (1982). \emph{Tannakian categories}. In Hodge cycles, motives, and Shimura
				varieties, Volume 900 of Lecture Notes in Mathematics, pp.~101-228. Berlin, SpringerVerlag.
				
				
				
				
				
				
				\bibitem[Fal03]{Faltings03} G.~Faltings: \emph{Algebraic loop groups and moduli spaces  of bundles}, J.\ Eur.\ Math.\ Soc.\ {\bfseries 5} (2003), no.~1, 41--68. 
				
				
				
				
				\bibitem[EGA]{EGA} A.~Grothendieck: \emph{{\'E}lements de G{\'e}o\-m{\'e}trie Alg{\'e}\-brique}, Publ.\ Math.\ IHES {\bfseries 4}, {\bfseries 8}, {\bfseries 11}, {\bfseries 17}, {\bfseries 20}, {\bfseries 24}, {\bfseries 28}, {\bfseries 32}, Bures-Sur-Yvette, 1960--1967; see also Grundlehren {\bfseries 166}, Springer-Verlag, Berlin etc.\ 1971.
				
				
				
				
				\bibitem[GHN]{GHN} U.~Goertz, X.~He, S.~Nie
				\emph{$P$-alcoves and nonemptiness of affine Deligne-Lusztig varieties}
				
				
				
				\bibitem[He14]{He} X.~ He, \emph{Kottwitz-Rapoport conjecture on unions of affine Deligne-Lusztig varieties}, arXiv:1408.5838.
				
				\bibitem[H-KI]{H-K} P. Hamacher, and W. Kim, \emph{On G-isoshtukas over function fields}, \href{https://arxiv.org/abs/2003.07389}{arXiv:2003.07389}. 
				
				
				\bibitem[H-KII]{H-KII} P. Hamacher, and W. Kim, \emph{Point counting on Igusa vaarieties for function fields}, \href{https://arxiv.org/pdf/2208.01069.pdf}{arXiv:2208.01069}. 
				
				
				\bibitem[H-Ra]{H-R1} T.~Haines and M.~Rapoport: \emph{On parahoric subgroups}, appendix to \cite{PR2}; also available as \href{http://arxiv.org/abs/0804.3788}{arXiv:0804.3788}.
				
				\bibitem[H-Ri]{H-R2} T. Haines, T. Richarz: \emph{The test function conjecture for parahoric local models}, preprint (2018),
				arXiv:1801.07094
				
				
				\bibitem[Har05]{Har1} U.~Hartl: \emph{Uniformizing the Stacks of Abelian Sheaves}, in Number Fields and Function fields - Two Parallel Worlds, Papers from the 4th Conference held on Texel Island, April 2004, Progress in Math.\ {\bfseries 239}, Birkh\"auser-Verlag, Basel 2005, pp.~167--222; also available as \href{http://arxiv.org/abs/math/0409341}{arXiv:math.NT/0409341}.
				
				\bibitem[Har11]{HartlPSp} U.~Hartl: \emph{Period Spaces for Hodge Structures in Equal Characteristic}, Annals of Math. {\bfseries 173}, n.~3 (2011), 1241--1358; also available as \href{http://arxiv.org/abs/math/0511686}{arXiv:math.NT/0511686}.
				
				
				\bibitem[HS15]{HartlSingh} U.~Hartl, R.~K.~Singh: \emph{Local Shtukas and Divisible Local Anderson Modules}, preprint on \href{http://arxiv.org/pdf/1511.03697v2.pdf}{arXiv:1511.03697v2}.
				
				
				\bibitem[HV11]{H-V} U.~Hartl, E.~Viehmann: \emph{The Newton stratification on deformations of local $G$-shtukas}, J.\ reine angew.\ Math.\ (Crelle) {\bfseries 656} (2011), 87--129; also available as \href{http://arxiv.org/abs/0810.0821}{arXiv:0810.0821}.
				
				
				
				\bibitem[Jan92]{Jan92} U.~Jannsen, \emph{Motives, numerical equivalence, and semi-simplicity}, Invent.\ Math.\ {\bfseries 107} (1992), no.~3, 447--452.
				
				
				
				\bibitem[Kis13]{Kis} M.~ Kisin, \emph{Mod $p$ points on Shimura varieties of abelian type}, preprint 2013 available at \href{http://www.math.harvard.edu/~kisin/dvifiles/lr.pdf}{http:/\!/www.math.harvard.edu/$\sim$kisin/}.
				
				\bibitem[Kot85]{Ko1} R.~E.~Kottwitz: \emph{Isocrystals with additional structure}, Compositio Math. {\bfseries 56}  (1985),  no.\ 2, 201--220.
				
				\bibitem[Kot90]{Ko2} R. Kottwitz, \emph{Shimura varieties and $\lambda$-adic representations, Automorphic forms Shimura varieties and L-functions I} (Ann Arbor 1988), Perspectives in Math.~10, pp.~ 161-209,~1990. 
				
				\bibitem[Kot92]{Ko3} R.~ Kottwitz, \emph{Points on some Shimura varieties over finite fields}, J.~ AMS 5 (1992), 373--444. 
				
				\bibitem[Kot97]{Ko4} R.~E.~Kottwitz: \emph{Isocrystals with additional structure II}, Compositio Math. {\bfseries 109} (1997), no.\ 3, 255--339.
				
				\bibitem[Kot14]{Ko5} R. Kottwitz: \emph{$B(G)$ for all local and global fields}, arXiv:1401.5728v1. Preprint
				
				
				
				\bibitem[Laf12]{Laff12} V. Lafforgue, \emph{Chtoucas pour les groupes r\'eductifs et param\'etrisation de Langlands globale}, preprint on \href{http://arxiv.org/abs/1209.5352}{arXiv:1209.5352}.
				
				
				
				
				\bibitem[Lau96]{Laumon} G.~Laumon: \emph{Cohomology of Drinfeld Modular Varieties I}, Cambridge Studies in Advanced Mathematics 41, Cambridge University Press, Cambridge, 1996.
				
				
				\bibitem[LR87]{LR} R.~Langlands, M.~Rapoport: \emph{Shimuravariet{\"a}ten und Gerben}, J.~Reine Angew.\ Math.\ {\bfseries 378} (1987), 113--220. 
				
				\bibitem[LM00]{L-M} G.\ Laumon, L.\ Moret-Bailly: \emph{Champs algebriques}, Ergeb.\ Math. Grenzgebiete 39, Springer, Berlin, 2000.
				
				
				
				\bibitem[Mil92]{Milne92} J.~Milne: \emph{The points on a Shimura variety modulo a prime of good reduction}, in: The zeta functions of Picard modular surfaces, pp.~151--253, Univ. Montréal, Montreal, QC, 1992. 
				
				
				
				\bibitem[NP01]{Ngo-Polo} B. C.\ Ng\^o, P.\ Polo: \emph{R\'esolutions de Demazure affines et formule de Casselman-Shalika g\'eom\'etrique}, J.\ Algebraic Geom.\ {\bfseries 10} (2001), no. 3, 515--547; also available at \href{http://www.math.uchicago.edu/~ngo/ngo-polo.pdf}{http:/\!/www.math.uchicago.edu/$\sim$ngo/}. 
				
				\bibitem[NDac]{NgoDac} T. Ngo Dac \emph{Comptage des $G$-chtoucas: La partie elliptique}. Compositio Mathematica, 149(12), 2169--2183. doi:10.1112/S0010437X13007100
				
				
				\bibitem[PR08]{PR2} G.~Pappas, M.~Rapoport: \emph{Twisted loop groups and their affine flag varieties}, Advances in Math.\ {\bfseries 219} (2008), 118--198; also available as \href{http://arxiv.org/abs/math/0607130}{arXiv:math/0607130}.
				
				
				\bibitem[RZ96]{RZ} M.\ Rapoport, T.\ Zink: {\it Period Spaces for $p$-divisible Groups\/}, Ann.\ Math.\ Stud.\ {\bf 141}, Princeton University Press, Princeton 1996.
				
				
				
				\bibitem[Ric10]{Richarz}
				T.\ Richarz: \emph{Schubert varieties in twisted affine flag varieties and local models}, Journal of Algebra 375 (2013), 121-147; also available as \href{http://arxiv.org/abs/1011.5416}{arXiv:1011.5416}.
				
				\bibitem[R\"ot]{FelixThesis} F. R\"otting \emph{Honda-Tate-Theory for A-Motives and
					Global Shtukas}, Dissertation 2018.
				
				
				
				
				\bibitem[St]{Stein} R. Steinberg, \emph{Regular elements of semisimple algebraic groups}, IHES. Publ. Math. No. 25 (1965), 49–80.
				
				\bibitem[Tam94]{Tam} A. Tamagawa: \emph{Generalization of Anderson's $t$-motives and Tate conjecture}, in ``Moduli Spaces, Galois Representations and $L$-Functions'', S\=urikaisekikenky\=usho K\=oky\=uroku, n.~884, Kyoto (1994), 154--159.
				
				
				
				
				\bibitem[Wed04]{Wed} T.~Wedhorn, \emph{On Tannakian duality over valuation rings}, J. Algebra {\bfseries 282} (2004), no.~2, 575--609; also available at \href{http://www2.math.uni-paderborn.de/fileadmin/Mathematik/People/wedhorn/publications/Tannakvalsubmit.pdf}{http:/\!/www2.math.uni-paderborn.de/people/torsten-wedhorn}.
				
				
			\end{thebibliography}
	\end{document}